\newtheorem{theorem}{Theorem}[section]
\newtheorem{lemma}[theorem]{Lemma}
\newtheorem{proposition}[theorem]{Proposition}
\def\J#1#2#3{ \left\{ #1,#2,#3 \right\} }
\def\NN{{\mathbb{N}}}
\def\11{\textbf{$1$}}
\begin{document}

\title[Tingley's problem through the facial structure]{Tingley's problem through the facial structure of an atomic JBW$^*$-triple}

\author[F.J. Fern\'{a}ndez-Polo]{Francisco J. Fern\'{a}ndez-Polo}
\author[A.M. Peralta]{Antonio M. Peralta}

\address{Departamento de An{\'a}lisis Matem{\'a}tico, Facultad de
Ciencias, Universidad de Granada, 18071 Granada, Spain.}
\email{pacopolo@ugr.es}
\email{aperalta@ugr.es}


\subjclass[2010]{Primary 47B49, Secondary 46A22, 46B20, 46B04, 46A16, 46E40.}

\keywords{Tingley's problem; extension of isometries; atomic JBW$^*$-triples, Cartan factors}

\date{}

\begin{abstract} We prove that every surjective isometry between the unit spheres of two atomic JBW$^*$-triples $E$ and $B$ admits a unit extension to a surjective real linear isometry from $E$ into $B$. This result constitutes a new positive answer to Tignley's problem in the Jordan setting.
\end{abstract}

\maketitle
\thispagestyle{empty}

\section{Introduction}

In a recent contribution we establish that every surjective isometry between the unit spheres of two $B(H)$-spaces extends uniquely to a surjective complex linear or conjugate linear surjective isometry between the corresponding spaces (see \cite{FerPe17b}). This result constitutes a positive answer to Tingley's isometric extension problem \cite{Ting1987} in the setting of $B(H)$-spaces and atomic von Neumann algebras. Solutions to Tingley's problem for compact operators, compact C$^*$-algebras and weakly compact JB$^*$-triples have been previously obtained in \cite{PeTan16,FerPe17}. For additional information on the historic background and the state of the art of Tingley's problem the reader is referred to the introduction of \cite{FerPe17b} and to the monograph \cite{YangZhao2014}.\smallskip

Problems in C$^*$-algebras, von Neumann algebras and operator algebras are often considered in the context of Banach Jordan algebras and Jordan triple systems. Such studies widen the scope and often introduce new ideas and techniques not present in the associative case. The class of JB$^*$-triples have a rich interaction with Banach Space Theory. The spaces in this class enjoy a unique geometry which makes more interesting the study of certain geometric problems in a wider setting. This paper is devoted to extend the recent results in \cite{FerPe17b} to the context of atomic JBW$^*$-triples (i.e. $\ell_{\infty}$-sums of Cartan factors).\smallskip

We recall that a \emph{JB$^*$-triple} is a complex Banach space $E$ which can be
equipped with a continuous triple product $\J ... :
E\times E\times E \to E,$ which is symmetric and linear in the
first and third variables, conjugate linear in the second variable
and satisfies the following axioms
\begin{enumerate}[{\rm (a)}] \item $L(a,b) L(x,y) = L(x,y) L(a,b) + L(L(a,b)x,y)
 - L(x,L(b,a)y),$
where $L(a,b)$ is the operator on $E$ given by$L(a,b) x = \J abx;$
\item $L(a,a)$ is an hermitian operator with non-negative
spectrum; \item $\|L(a,a)\| = \|a\|^2$.\end{enumerate}

Examples of JB$^*$-triples include C$^*$-algebras with respect to the triple product defined by
product\hyphenation{product}\begin{equation}\label{eq product operators} \J xyz =\frac12 (x y^* z +z y^*
x),
\end{equation} and JB$^*$-algebras under the triple
product $\J xyz = (x\circ y^*) \circ z + (z\circ y^*)\circ x -
(x\circ z)\circ y^*.$ The so-called \emph{ternary rings of operators} (TRO's) studied, for example, in \cite{NeRu} are also examples of JB$^*$-triples.\smallskip

A subtriple $\mathcal{I}$ of a JB$^*$-triple  $E$ is said to be an \emph{ideal} or a \emph{triple ideal} of $E$ if $\{ E,E,\mathcal{I}\} + \{ E,\mathcal{I},E\}\subseteq \mathcal{I}$.\smallskip

A JBW$^*$-triple is a JB$^*$-triple which is also a dual Banach space (with a unique isometric predual \cite{BarTi86}). It is known
that the second dual of a JB$^*$-triple is a JBW$^*$-triple (compare \cite{Di86}). An extension of Sakai's theorem assures that the triple product of every JBW$^*$-triple is separately weak$^*$-continuous (c.f. \cite{BarTi86} or \cite{Horn87}).\smallskip

Another illustrative examples of JBW$^*$-triples are given by the so-called Cartan factors. A complex Banach space is a \emph{Cartan factor of type 1} is it coincides with the complex Banach space $L(H, K)$, of all bounded linear operators between two complex Hilbert spaces, $H$ and $K$, whose triple product is given by \eqref{eq product operators}.\smallskip

Given a conjugation, $j$, on a complex Hilbert space, $H$, we can define a linear involution on $L(H)$ defined by $x \mapsto x^{t}:=j x^* j$. A \emph{type 2 Cartan factor} is a subtriple of $L(H)$ formed by the skew-symmetric operators for the involution $t$; similarly, a \emph{type 3 Cartan factor} is formed by the $t$-symmetric operators.  A Banach space $X$ is called a \emph{Cartan factor of type 4} or \emph{spin} if $X$ admits a complete inner product $(.|.)$ and a conjugation $x\mapsto \overline{x},$ for which the norm of $X$ is given by $$ \|x\|^2 = (x|x) + \sqrt{(x|x)^2 -|
(x|\overline{x}) |^2}.$$ \emph{Cartan factors of types 5 and 6} (also called \emph{exceptional} Cartan factors) are all finite dimensional. An atomic JBW$^*$-triple is a JBW$^*$-triple which can be represented as an $\ell_{\infty}$-sum of Cartan factors. We refer to \cite{Chu2012} for additional details.\smallskip

Let $E$ and $B$ be atomic JBW$^*$-triples. In our main result we prove that every surjective isometry $f: S(E)\to S(B)$ admits a unique extension to a surjective real linear isometry $T : E \to B$ (see Theorem \ref{t Tingley on atomic JBW-triples}). This theorem extends the main conclusion in \cite{FerPe17b} to the setting of atomic JBW$^*$-triples. In \cite{FerPe17b}, we strived for arguments essentially based on standard techniques of C$^*$-algebras, Geometry and Functional Analysis. The proofs here are extended to the setting of JBW$^*$-triples with new and independent techniques which could be also applied to C$^*$-algebras.\smallskip

As in recent contributions studying Tingley's problem on C$^*$-algebras and von Neumann algebras, our arguments are based on an useful result due to L. Cheng, Y. Dong and R. Tanaka, which asserts that every surjective isometry between the unit spheres of two Banach spaces $X$ and $Y$ maps maximal proper {\rm(}norm closed{\rm)} face of the unit ball of $X$ to maximal proper {\rm(}norm closed{\rm)} face of the unit ball of $Y$ {\rm(}see \cite[Lemma 5.1]{ChengDong}, \cite[Lemma 3.5]{Tan2014} and \cite[Lemma 3.3]{Tan2016}{\rm)}.\smallskip

Throughout the paper, given a Banach space $X$ the symbols $\mathcal{B}_X$ and $S(X)$ will stand for the closed unit ball and the unite sphere of $X$, respectively.\smallskip

By a result of C.M. Edwards, C. Hoskin and the authors of this note (see \cite{EdFerHosPe2010}) we know that for each non-empty norm closed face $F$ of the unit ball $\mathcal{B}_E$ in a JB$^*$-triple $E$ there exists a unique compact tripotent $u$ in $E^{**}$ such that
$$ F = F_u=(u + E_0^{**}(u)) \cap \mathcal{B}_E ,$$ where $E_0^{**}(u)$ is the Peirce-zero space associated with $u$  in $E^{**}$.\smallskip

An appropriate application of Kadison's transitivity theorem for JB$^*$-triples (\cite[Theorems 3.3 and 3.4]{BuFerMarPe}) proves that every maximal proper norm closed face of $\mathcal{B}_E$ is of the form \begin{equation}\label{eq max closed faces} F_e= (e + E_0^{**}(e)) \cap \mathcal{B}_E,
\end{equation} for a unique minimal tripotent $e$ in $E^{**}$. However this minimal tripotent $e$ need not be in $E$.\smallskip

We recall that every atomic JBW$^*$-triple coincides with the weak$^*$-closure of the linear span of all its minimal tripotents (see \cite{FriRu86}).\smallskip

If $B$ is another atomic JBW$^*$-triple and $f : S(E) \to S(B)$ is a surjective isometry, for each minimal tripotent $e\in E$, there exists a minimal tripotent $u$ in $B^{**}$ satisfying $$f(F_e) = f((e + E_0(e)) \cap \mathcal{B}_E) = (u + B_0^{**}(u)) \cap \mathcal{B}_B.$$ However, as in the case of Tingley's theorem for surjective isometries between the unit spheres of $B(H)$-spaces (see \cite{FerPe17b}), when dealing with maximal proper faces in $\mathcal{B}_{B}$, minimal tripotents in $B^{**}$ need not be in $B$. To avoid this difficulty, in this paper we shall prove that, under the above conditions, the minimal tripotent $u$ belongs to $B$ (see Theorem \ref{t surjective isometries between the spheres preserve minimal tripotents}). The proofs in this note are based on geometric arguments combined with Functional Analysis techniques.

\section{Tingley's problem for atomic JBW$^*$-triples}

An element $u$ in a JB$^*$-triple $E$ is called tripotent if $\{u,u,u\} = u$. Each tripotent $u$ in $E$ induces a \emph{Peirce decomposition} of $E$ in the form $$E= E_{2} (u) \oplus E_{1} (u) \oplus E_0 (u),$$ where for each $i\in \{0,1,2\}$ the space $E_i (u)$ is precisely the
$\frac{i}{2}$ eigenspace of $L(u,u)$. Peirce arithmetic assures that $\J {E_{i}(u)}{E_{j} (u)}{E_{k} (u)}$ is contained in $E_{i-j+k} (u)$ if $i-j+k \in \{ 0,1,2\},$ and is zero otherwise. In addition, $$\J {E_{2} (u)}{E_{0}(u)}{E} = \J {E_{0} (u)}{E_{2}(u)}{E} =0.$$
The corresponding \emph{Peirce projections}, $P_{i} (u) : E\to
E_{i} (u)$, $(i=0,1,2)$ are contractive and satisfy $$P_2 (u) = L(u,u)
(2 L(u,u) -I), \ P_1 (u) = 4 L(u,u) (I-L(u,u)),$$  and $P_0 (u) = (I-L(u,u))
(I-2 L(u,u))$ (compare \cite{FriRu85}).\medskip

A non-zero tripotent $e$ in a JB$^*$-triple $E$ is called \emph{minimal} (respectively, \emph{complete} or \emph{maximal}) if $E_2 (e) = \mathbb{C} e$ (respectively, $E_0(e) =0$).\smallskip

Let $x$ be an element in a JB$^*$-triple $E$. We will denote by $E_x$ the JB$^*$-subtriple of $E$ generated by $x$, that is, the closed subspace generated by all odd powers of the form $x^{[1]} := x$, $x^{[3]} := \J xxx$, and $x^{[2n+1]} := \J xx{x^{[2n-1]}},$ $(n\in \NN)$. It is known that $E_x$ is JB$^*$-triple isomorphic (and hence isometric) to a commutative C$^*$-algebra in which $x$ is a positive generator (cf. \cite[Corollary 1.15]{Ka83}). Actually, there exist a (unique) subset Sp$(x)\subset [0,\|x\|]$ (called the \emph{triple spectrum} of $x$) such that $\|x\|\in $Sp$(x) \cup \{0\}$ and the latter is compact, and a triple isomorphism $\Psi : E_x\to C_0(\hbox{Sp} (x))$ mapping $x$ into the function $t\mapsto t$ (compare \cite{Ka96}).\smallskip

Suppose $x$ is a norm-one element. The sequence $(x^{[2n -1]})$ converges in the weak$^*$-topology of $E^{**}$ to a tripotent (called the \hyphenation{support}\emph{support} \emph{tripotent} of $x$) $u(x)$ in $E^{**}$ (see \cite[Lemma 3.3]{EdRutt88} or \cite[page 130]{EdFerHosPe2010}).\smallskip

We recall at this stage a result taken from \cite{FerPe17}.

\begin{proposition}\label{p l 3.4 new}{\rm\cite[Proposition 2.2]{FerPe17}} Let $e$ and $x$ be norm-one elements in a JB$^*$-triple $E$. Suppose that $e$ is a minimal tripotent and $\|e-x\| = 2$. Then the identity $x= -e + P_0(e) (x)$ holds. $\hfill\Box$
\end{proposition}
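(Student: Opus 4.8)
The plan is to work inside the commutative $C^*$-algebra $E_x$ and exploit the Peirce decomposition relative to the minimal tripotent $e$. First I would decompose $x$ according to the Peirce projections of $e$, writing $x = P_2(e)(x) + P_1(e)(x) + P_0(e)(x)$, and recall that since $e$ is minimal, $E_2(e) = \mathbb{C}e$, so $P_2(e)(x) = \lambda e$ for some $\lambda \in \mathbb{C}$. The goal is to show $\lambda = -1$ and $P_1(e)(x) = 0$. The condition $\|e - x\| = 2$ is the maximal possible distance between two norm-one elements, so geometrically it should force $x$ to be ``as far from $e$ as possible'', which at the level of the $2$-component means $P_2(e)(x) = -e$; the harder part is ruling out a nonzero Peirce-$1$ component.

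For the computational heart of the argument I would pass to the bidual $E^{**}$ and use the support tripotent $u = u(x)$, which satisfies $P_2(u)(x) = u$ and $\|x - u\|$ small behavior; more precisely, in $E_x \cong C_0(\mathrm{Sp}(x))$ the element $x$ is the identity function $t \mapsto t$, and $\|e - x\| = 2$ together with $\|e\| = \|x\| = 1$ should be analyzed via the functional calculus and the triple functional identities. A cleaner route: apply a norm-attaining functional. Since $\|e-x\|=2$, by Hahn--Banach there is a norm-one functional $\varphi \in E^*$ with $\varphi(e-x) = 2$, hence $\varphi(e) = 1$ and $\varphi(x) = -1$. Because $e$ is a minimal tripotent, the functional $\varphi$ can be taken to be (a state-like functional) supported at $e$, so that $\varphi = \varphi \circ P_2(e)$ on the relevant pieces; combined with $\varphi(x) = -1$ and $P_2(e)(x) = \lambda e$ this gives $\lambda = -1$.

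To kill the Peirce-$1$ part, I would use the norm estimate coming from Peirce arithmetic: with $P_2(e)(x) = -e$, consider $y := e + x = P_1(e)(x) + P_0(e)(x)$, an element with Peirce-$2$ component zero relative to $e$. The key inequality is that for a minimal tripotent $e$ and any $z \in E$, one has a sharpened triangle-type estimate, e.g. $\|e + P_1(e)(z) + P_0(e)(z)\|^2 \ge \|e + \tfrac12 P_1(e)(z)\|^2$ or an orthogonality-type bound forcing $\|e + x\|$ to detect $P_1(e)(x)$; since $\|x\| = 1$ and $P_2(e)(x) = -e$, one pushes to conclude $\|P_1(e)(x)\| = 0$. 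Concretely I expect one uses that $\{e,e,x\} = P_2(e)(x) + \tfrac12 P_1(e)(x) = -e + \tfrac12 P_1(e)(x)$, and then that $\|\{e,e,x\}\| \le \|x\| = 1$ forces (via $\|-e + \tfrac12 P_1(e)(x)\|$ and the fact that the two summands are ``orthogonal enough'' in the Peirce sense) the Peirce-$1$ term to vanish.

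The main obstacle will be the last step: showing rigorously that $\|-e + \tfrac12 P_1(e)(x)\| \le 1$ implies $P_1(e)(x) = 0$. This is not a formal triangle inequality and requires a genuine use of the JB$^*$-triple axioms, presumably via the fact that $e$ is a tripotent so $\|\{e,e,z\}\|$ relates to the Peirce norms, or via passing to the C$^*$-algebra $E_x$ where one can compute norms of the relevant combinations explicitly. I would also need to confirm the existence of the ``state at $e$'' functional, which follows from minimality of $e$ and standard facts about the predual of $E^{**}$ (pure normal functionals correspond to minimal tripotents). Once those two ingredients are in place, combining $P_2(e)(x) = -e$ with $P_1(e)(x) = 0$ gives exactly $x = -e + P_0(e)(x)$. $\hfill\Box$
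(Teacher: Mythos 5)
Your overall strategy is the right one, and it essentially coincides with the proof of this result in its source {\rm(}\cite[Proposition 2.2]{FerPe17}{\rm)}; note that the present paper only quotes the statement and does not reprove it. The first half can be made rigorous exactly along the lines you sketch: Hahn--Banach gives a norm-one functional $\varphi$ with $\varphi(e-x)=2$, hence $\varphi(e)=1=-\varphi(x)$. There is no need to ``choose'' $\varphi$ supported at $e$: any norm-one functional with $\varphi(e)=\|\varphi\|$ automatically satisfies $\varphi=\varphi\circ P_2(e)$ (a standard Friedman--Russo fact), and the minimality of $e$ then forces $P_2(e)(z)=\varphi(z)\,e$ for every $z$, whence $P_2(e)(x)=\varphi(x)e=-e$. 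So the coefficient $\lambda=-1$ comes out cleanly.

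The genuine gap is the step you yourself flag: you never prove that $\|-e+\tfrac12 P_1(e)(x)\|\leq 1$ implies $P_1(e)(x)=0$, and neither manipulations with $\{e,e,x\}$ alone nor passing to the subtriple $E_x$ (which need not contain $e$) will deliver it. The missing ingredient is Lemma 1.6 in \cite{FriRu85}: if $u$ is a tripotent and $z$ lies in the closed unit ball with $P_2(u)(z)=u$, then $z=u+P_0(u)(z)$; this paper invokes exactly that lemma in the proof of Proposition \ref{p properties for finite linear combinations of minimal tripotents}$(a)$. With it, your $L(e,e)$ detour becomes unnecessary: once $P_2(e)(x)=-e$ is known, apply the lemma to $-x$, which has norm one and Peirce-2 part equal to $e$, to get $-x=e+P_0(e)(-x)$, that is, $x=-e+P_0(e)(x)$. (Your route also closes with the same lemma, applied to $e-\tfrac12P_1(e)(x)$, whose Peirce-2 part is $e$; but the implication is genuinely a JB$^*$-triple fact, not a Banach-space estimate, so without citing or reproving that lemma the argument is incomplete.) In summary: correct skeleton, same two-step scheme as the cited proof, but the decisive annihilation of the Peirce-1 component rests on a nontrivial lemma that your write-up leaves unproven.
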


Following the standard notation, for each ultrafilter $\mathcal{U}$ on an index set $I$, and each family $(X_i)_{i\in I}$ of Banach spaces, we denote by $(X_i)_{\mathcal{U}}$
the corresponding ultraproduct of the $X_i,$ and if $X_i=X$ for all $i$, we write $(X)_{\mathcal{U}}$ for the ultrapower of $X$. As usually, elements in $(X_i)_{\mathcal{U}}$ will be denoted in the form $\widetilde{x}=[x_i]_{\mathcal{U}}$, where $(x_i)$ is called
a representing family or a representative of $\widetilde{x}$, and $\left\|{\widetilde{x}}\right\|=\lim_{\mathcal{U}}\left\|{x_i}\right\|$ independently of the representative. The basic facts and definitions concerning ultraproducts can be found in \cite{Hein80}.\smallskip

In our concrete setting, we can reduce our attention to a family $(E_i )_{i\in I}$ of JB$^*$-triples. It is known that JB$^*$-triples are stable under $\ell_{\infty}$-sums (see \cite[p. 523]{Ka83}), thus the Banach space $\ell_{\infty} (E_i)$ is a JB$^*$-triple with pointwise operations. The closed subtriple ${c}_0 \left(E_i\right)$ is a triple ideal of $\ell_{\infty} (E_i)$, and hence $(E_i)_{\mathcal{U}}= \ell_{\infty} (E_i)/{c}_0 \left(E_i\right)$ is a JB$^*$-triple (see \cite{Ka83}).\smallskip

Our next goal is a quantitative version of the previous Proposition \ref{p l 3.4 new}.

\begin{proposition}\label{p l 3.4 new quatitative} Let $e$ be a minimal tripotent in a JB$^*$-triple $E$. Then for each $\varepsilon >0$ there exists $\delta>0$ satisfying the following property: for each $x\in S(E)$ with $\|e-x \| > 2 -\delta$ we have $\| P_2 (e) (e -x) \|> 2-\varepsilon.$
\end{proposition}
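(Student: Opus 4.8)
The plan is to deduce this quantitative refinement from the qualitative Proposition~\ref{p l 3.4 new} by a routine ultrapower argument, taking advantage of the fact, recalled above, that an ultrapower of a JB$^*$-triple is again a JB$^*$-triple.

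Arguing by contradiction, suppose the statement fails for some $\varepsilon_0>0$. Then, choosing $\delta=\frac1n$ for each $n\in\NN$, we obtain a sequence $(x_n)$ in $S(E)$ with $\|e-x_n\|>2-\frac1n$ and $\|P_2(e)(e-x_n)\|\le 2-\varepsilon_0$. Fix a free ultrafilter $\mathcal{U}$ on $\NN$ and pass to the ultrapower $(E)_{\mathcal{U}}=\ell_\infty(E)/c_0(E)$, equipped with its componentwise triple product. Set $\widetilde e=[e]_{\mathcal{U}}$ (the class of the constant family) and $\widetilde x=[x_n]_{\mathcal{U}}$. Then $\|\widetilde x\|=\lim_{\mathcal{U}}\|x_n\|=1$, while $\|\widetilde e-\widetilde x\|=\lim_{\mathcal{U}}\|e-x_n\|=2$.

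The key point I would verify is that $\widetilde e$ is a minimal tripotent of $(E)_{\mathcal{U}}$. It is a non-zero tripotent, since the triple product on the ultrapower is defined componentwise. Moreover, because each Peirce projection $P_i(e)$ $(i=0,1,2)$ is a fixed norm-continuous polynomial in $L(e,e)$ (see the explicit formulas at the beginning of this section), one gets $P_i(\widetilde e)\big([y_n]_{\mathcal{U}}\big)=[P_i(e)(y_n)]_{\mathcal{U}}$ for every $[y_n]_{\mathcal{U}}\in(E)_{\mathcal{U}}$; hence $\big((E)_{\mathcal{U}}\big)_2(\widetilde e)=\big(E_2(e)\big)_{\mathcal{U}}=(\CC e)_{\mathcal{U}}=\CC\widetilde e$, where the last equality uses that a bounded scalar family $(\lambda_n e)$ represents the element $(\lim_{\mathcal{U}}\lambda_n)\widetilde e$. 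Thus $\widetilde e$ is minimal.

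Now Proposition~\ref{p l 3.4 new}, applied to the norm-one elements $\widetilde e$ and $\widetilde x$ of $(E)_{\mathcal{U}}$ with $\|\widetilde e-\widetilde x\|=2$, gives $\widetilde x=-\widetilde e+P_0(\widetilde e)(\widetilde x)$. Applying $P_2(\widetilde e)$ and using Peirce arithmetic we obtain $P_2(\widetilde e)(\widetilde e-\widetilde x)=2\widetilde e$, which has norm $2$. On the other hand, by the componentwise formula for the Peirce projections, $\|P_2(\widetilde e)(\widetilde e-\widetilde x)\|=\lim_{\mathcal{U}}\|P_2(e)(e-x_n)\|\le 2-\varepsilon_0<2$, a contradiction; this proves the proposition. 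The only nontrivial step is the one in the previous paragraph, namely that the Peirce decomposition of $E$ relative to $e$ transfers verbatim to the ultrapower so that $\widetilde e$ remains minimal there; once this is granted, the rest is elementary bookkeeping with ultrapower norms.
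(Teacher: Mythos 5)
Your argument is correct and is essentially the same as the paper's: a contradiction sequence, passage to the ultrapower $(E)_{\mathcal{U}}$, verification that $[e]_{\mathcal{U}}$ remains minimal (via the componentwise action of the Peirce projections and the ultrafilter limit of the bounded scalars), and then an application of Proposition~\ref{p l 3.4 new} to force $\|P_2(e)(e-x_n)\|\to 2$ along $\mathcal{U}$. No gaps; your explicit remark that the Peirce projections are polynomials in $L(e,e)$ just makes the componentwise transfer, which the paper uses implicitly, a bit more visible.
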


\begin{proof} Arguing by reduction to the absurd, we assume the existence of $\varepsilon >0$ such that for each natural $n$ there exists $x_n\in S(E)$ with $\|e-x_n\| > 2-\frac1n$ and $\| P_2 (e) (e -x_n) \|\leq 2-\varepsilon,$ for every natural $n$.\smallskip

Let $\mathcal{U}$ be a free ultrafilter over $\mathbb{N}$. We consider the JB$^*$-triple $E_{\mathcal{U}}$. Clearly the element $[e]_{\mathcal{U}}$ is a tripotent in $E_{\mathcal{U}}$. We claim that $[e]_{\mathcal{U}}$ is a minimal tripotent. Indeed, suppose  $\widetilde{x}=[x_n]_{\mathcal{U}}\in (E_{\mathcal{U}})_2 ([e]_{\mathcal{U}})$, then $$[x_n]_{\mathcal{U}} = P_2 ([e]_{\mathcal{U}}) [x_n]_{\mathcal{U}} = [P_2 (e) (x_n) ]_{\mathcal{U}} = [\lambda_n e ]_{\mathcal{U}},$$ where $(\lambda_n)$ is a bounded family in $\mathbb{C}$. By compactness arguments the limit $\lim_{\mathcal{U}} (\lambda_n) = \lambda_0$ exists in $\mathbb{C}$. By the triangular inequality we have $$\left\| [x_n]_{\mathcal{U}} -\lambda_0 [e]_{\mathcal{U}} \right\|\leq \left\| [x_n]_{\mathcal{U}} - [ \lambda_n e]_{\mathcal{U}} \right\| + \left\| [ \lambda_n e]_{\mathcal{U}} -\lambda_0 [e]_{\mathcal{U}} \right\| =  \lim_{\mathcal{U}}|\lambda_n -\lambda_0|,$$ and hence $[x_n]_{\mathcal{U}} = \lambda_0 [e]_{\mathcal{U}} \in \mathbb{C} [e]_{\mathcal{U}}$.\smallskip

Finally, since $\|[x_n]_{\mathcal{U}}\| = 1$, and $2\geq \| [e]_{\mathcal{U}} - [x_n]_{\mathcal{U}}\| = \lim_{\mathcal{U}} \| e-x_n\| \geq \lim_{\mathcal{U}}  2-\frac1n =2.$ Applying Proposition \ref{p l 3.4 new} we get $$[x_n]_{\mathcal{U}} = [-e]_{\mathcal{U}}+ P_0 ([e]_{\mathcal{U}}) [x_n]_{\mathcal{U}} = [-e+P_0 (e)(x_n)]_{\mathcal{U}} ,$$ and thus $$[P_2 (e) (e -x_n)]_{\mathcal{U}}=P_2 ([e]_{\mathcal{U}}) ([e]_{\mathcal{U}} - [x_i]_{\mathcal{U}}) = 2[e]_{\mathcal{U}},$$ and
$$\lim_{\mathcal{U}} \|P_2 (e) (e -x_n) \| = \lim_{\mathcal{U}} \| 2 e \| =2,$$ which contradicts $\| P_2 (e) (e -x_n) \|\leq 2-\varepsilon,$ for every natural $n$.
\end{proof}

A common tool applied in all recent studies on Tingley's problem for non-commutative structures is based on an appropriate description of the facial structures of the closed unit balls of the involved spaces. The justification is essentially due to the following result established by L. Cheng, Y. Dong and R. Tanaka.

\begin{proposition}\label{p preserves maximal convex subsets}{\rm(}\cite[Lemma 5.1]{ChengDong}, \cite[Lemma 3.5]{Tan2014} and \cite[Lemma 3.3]{Tan2016}{\rm)} Let $X$, $Y$ be Banach spaces, and let $T : S(X) \to S(Y )$ be a surjective isometry. Then $C$ is a
maximal convex subset of $S(X)$ if and only if $T(C)$ is that of $S(Y)$. Then $C$ is a maximal proper {\rm(}norm closed{\rm)} face of $\mathcal{B}_X$ if and only if $f(C)$ is a maximal proper {\rm(}norm closed{\rm)} face of $\mathcal{B}_Y$.$\hfill\Box$
\end{proposition}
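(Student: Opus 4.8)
Since the statement is a purely metric--geometric fact about surjective isometries between the unit spheres of Banach spaces --- the triple structure plays no role --- the plan is to reproduce the arguments of Cheng--Dong and Tanaka.

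First I would reduce the ``face'' equivalence to the ``maximal convex subset'' equivalence. Recall that a proper face $F$ of $\mathcal{B}_X$ necessarily lies in $S(X)$: if $x\in F$ had $\|x\|<1$, then for small $\varepsilon>0$ and any $u\in S(X)$ one has $x\pm\varepsilon u\in\mathcal{B}_X$ and $x=\tfrac12\big((x+\varepsilon u)+(x-\varepsilon u)\big)$, so the face property puts a whole ball around $x$ inside $F$, and a face containing an open ball must be all of $\mathcal{B}_X$. Hence a norm-closed proper face is a closed convex subset of $S(X)$. Conversely a maximal convex subset $C$ of $S(X)$ is norm closed (its closure is again a convex subset of $S(X)$, hence equals $C$) and is a face: if $p$ belongs to the face of $\mathcal{B}_X$ generated by $C$ then $p$ enters with a positive coefficient into a convex combination equal to some point of $C\subseteq S(X)$, forcing $\|p\|=1$; so that face is a convex subset of $S(X)$ containing $C$, hence equals $C$. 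Thus the maximal proper norm-closed faces of $\mathcal{B}_X$ are exactly the maximal convex subsets of $S(X)$, and since $T$ and $T^{-1}$ are homeomorphisms the two equivalences coincide; I work with maximal convex subsets.

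Next I would record the metric description on which everything hinges: for a maximal convex subset $C$ of $S(X)$ and $x\in S(X)$, one has $x\in C$ if and only if $\|x+c\|=2$ for every $c\in C$. The forward implication is immediate since $\tfrac12(x+c)\in C\subseteq S(X)$. For the converse, $\|x+c\|=2$ forces the segment $[x,c]$ to lie in $S(X)$; since $C$ is convex, $\operatorname{conv}(C\cup\{x\})=\{\lambda x+(1-\lambda)c:\ c\in C,\ \lambda\in[0,1]\}$, and every such point lies on one of those segments, hence on $S(X)$, so $x\in C$ by maximality. In particular $C$ is determined by the binary relation ``$\|u+v\|=2$'' on $S(X)$.

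The crux is now to show that a surjective isometry $T:S(X)\to S(Y)$ interacts correctly with this relation and with convexity, so as to carry maximal convex subsets onto maximal convex subsets. The key step is to prove $\|x+y\|=2\iff\|Tx+Ty\|=2$ for $x,y\in S(X)$. Here $\|x+y\|=2$ means exactly that the straight segment $[x,y]$ sits on $S(X)$, so $T$ restricts on it to an isometric embedding of an interval into $S(Y)$; the point --- and where the surjectivity of $T$ is genuinely needed, typically by playing $T$ against $T^{-1}$ --- is to deduce $\|Tx+Ty\|=2$, i.e.\ that the image segment $[Tx,Ty]$ lies on $S(Y)$ (this is not automatic, since $T$ need not be affine on $[x,y]$). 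Granting this, it remains to upgrade ``$\|u+v\|=2$ pairwise on $T(C)$'' to $\operatorname{conv}(T(C))\subseteq S(Y)$; combining that with the analogous statement for $T^{-1}$ then identifies $T(C)$ with the maximal convex subset of $S(Y)$ containing it. I expect this last stretch --- the preservation of $\|x+y\|=2$ together with the passage from it to convexity of the image --- to be the main obstacle; it is exactly the content of \cite[Lemma~5.1]{ChengDong} and its refinements \cite[Lemma~3.5]{Tan2014}, \cite[Lemma~3.3]{Tan2016}, while the reductions above are routine.
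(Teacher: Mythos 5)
The paper offers no proof of this proposition at all: it is imported verbatim from Cheng--Dong and Tanaka, and the ``proof'' consists of the three citations. Measured against that, your decision to defer the core to those references is consistent with the paper's treatment, and your preliminary reductions are correct: proper faces of $\mathcal{B}_X$ lie in $S(X)$, maximal convex subsets of $S(X)$ are norm-closed proper faces (your generated-face argument works once one checks that the set of points entering with positive weight into convex decompositions of points of $C$ is itself a face; the standard alternative is the Eidelheit separation theorem, which exhibits each maximal convex subset as $\mathcal{B}_X\cap f^{-1}(1)$ for a norm-one functional $f$), and a maximal convex subset satisfies $C=\{x\in S(X):\|x+c\|=2\ \forall c\in C\}$. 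These facts are indeed part of what the cited lemmas of Tanaka record.

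As a self-contained proof, however, the central assertion --- that $T$ carries maximal convex subsets onto maximal convex subsets --- is exactly what is missing, and the route you sketch for it would not go through as stated. First, the equivalence $\|x+y\|=2\iff\|Tx+Ty\|=2$ is not a modest intermediate step: the natural argument writes $\|Tx+Ty\|=\|Tx-T(-y)\|$ and hence needs $T(-y)=-T(y)$, i.e.\ Tingley's antipodal theorem, a substantial result you never invoke. Second, the proposed ``upgrade'' from the pairwise relation $\|u+v\|=2$ on $T(C)$ to $\mathrm{conv}(T(C))\subseteq S(Y)$ cannot be a formal consequence of the pairwise relation: in $\ell_\infty^3$ the unit vectors $(1,1,-1)$, $(1,-1,1)$, $(-1,1,1)$ satisfy $\|u+v\|=2$ pairwise, while their barycentre has norm $\frac13$. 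So the pairwise information, even combined with the same statement for $T^{-1}$, does not by itself identify $T(C)$ with a maximal convex subset (nor is the maximal convex subset containing a given set unique in general); this is precisely the delicate point that Cheng--Dong's Lemma 5.1 and Tanaka's subsequent refinements address by a more involved argument. In short: relative to the paper your proposal is acceptable because the paper also merely cites, but as a proof attempt the key step is absent and the indicated strategy for it is flawed.
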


Accordingly to the notation in \cite{EdRu96} and \cite{FerPe06}, a tripotent $e$ in the second dual, $E^{**}$, of a JB$^*$-triple $E$ is said to be \emph{compact-$G_{\delta}$} if there exists a norm one element $a$ in $E$ such that $e$ is the support tripotent of $a$. A tripotent $e$ in $E^{**}$ is called \emph{compact} if $e=0$ or it is the infimum of a decreasing net of compact-$G_{\delta}$ tripotents in $E^{**}$.\smallskip

The norm closed faces of the closed unit ball of a C$^*$-algebra were determined by C.A. Akemann and G.K. Pedersen in \cite{AkPed92}. Their characterization played a decisive role in the arguments presented in \cite{Tan2016,Tan2016-2,Tan2016preprint,PeTan16} and \cite{FerPe17b}. The result of Akemann and Pedersen was extended to the strictly wider setting of JB$^*$-triples by C.M. Edwards, C. Hoskin and the authors of this note in \cite{EdFerHosPe2010}. For later purposes we recall a theorem borrowed from the just quoted paper.

\begin{theorem}\label{thm norm closed faces}\cite{EdFerHosPe2010} Let $E$ be a JB$^*$-triple, and let $F$ be a non-empty norm closed face of
the unit ball $\mathcal{B}_E$ in $E$. Then, there exists a unique compact tripotent
$u$ in $E^{**}$ such that
\[
F = F_u=(u + E_0^{**}(u)) \cap \mathcal{B}_E ,
\]
where $E_0^{**}(u)$ is the Peirce-zero space associated with $u$  in $E^{**}$. Furthermore, the mapping $u \mapsto F_u$ is an anti-order isomorphism from the lattice $\tilde{\mathcal{U}}_c( E^{**})$ of all compact tripotents in $E^{**}$ onto the complete lattice of norm closed faces of $\mathcal{B}_E$.$\hfill\Box$
\end{theorem}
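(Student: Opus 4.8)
The plan is to obtain the statement by pulling back, to $\mathcal{B}_E$, the known description of the facial structure of the closed unit ball of the JBW$^*$-triple $E^{**}$, and then deciding which tripotents of $E^{**}$ actually arise. Recall (the Edwards--R\"uttimann description, the JBW$^*$-triple analogue of the von Neumann algebra case) that every weak$^*$-closed face of the closed unit ball of a JBW$^*$-triple $W$ has the form $G_v=(v+W_0(v))\cap\mathcal{B}_W$ for a unique tripotent $v\in W$, and that $v\mapsto G_v$ is an anti-order isomorphism of the complete lattice of tripotents of $W$ onto the complete lattice of weak$^*$-closed faces of $\mathcal{B}_W$. So, taking $W=E^{**}$, the two things to prove are: (i) that to each non-empty norm closed face $F$ of $\mathcal{B}_E$ there is attached a weak$^*$-closed face $\widetilde{F}$ of $\mathcal{B}_{E^{**}}$, hence a unique tripotent $u\in E^{**}$ with $\widetilde{F}=G_u$ and $F=\widetilde{F}\cap E=F_u$; and (ii) that the tripotents $u$ produced this way are exactly the compact ones.

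I would begin with the single-generator case. For a norm-one $a\in E$ the smallest norm closed face of $\mathcal{B}_E$ containing $a$ equals $F_{u(a)}=(u(a)+E_0^{**}(u(a)))\cap\mathcal{B}_E$, where $u(a)$ is the support tripotent of $a$, which is a compact-$G_{\delta}$ tripotent by definition; moreover $\overline{F_{u(a)}}^{w^*}=G_{u(a)}$. Both the computation of this minimal face and the weak$^*$-density statement are carried out by local spectral analysis inside the commutative C$^*$-algebra $E_a$ generated by $a$ (and its bidual), together with overlap estimates of the type of Proposition~\ref{p l 3.4 new} and its quantitative refinement Proposition~\ref{p l 3.4 new quatitative}.

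For a general non-empty norm closed face $F$, since a closed convex subset of $\mathcal{B}_E$ is the closed convex hull of the minimal faces generated by its points, one has that $F$ is the closed convex hull of $\bigcup_{a\in F}F_{u(a)}$. Passing to weak$^*$-closures in $E^{**}$ and using the single-generator case together with the anti-order isomorphism at the bidual level (under which the supremum of a family of faces corresponds to the infimum of the associated tripotents) yields $\widetilde{F}=G_u$ with $u=\bigwedge_{a\in F}u(a)$; being an infimum of compact-$G_{\delta}$ tripotents (hence, after passing to the decreasing net of their finite infima, a compact tripotent), $u$ is compact, and $F=\widetilde{F}\cap E=F_u$. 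Conversely, for an arbitrary compact tripotent $u$, written as the infimum of a decreasing net of compact-$G_{\delta}$ tripotents $u(a_{\lambda})$, one uses the upward direction of the correspondence to produce norm-one elements of $E$ lying in $F_u$ with support tripotents approximating $u$, so that $F_u=(u+E_0^{**}(u))\cap\mathcal{B}_E$ is a non-empty norm closed face of $\mathcal{B}_E$ with $\overline{F_u}^{w^*}=G_u$. Combining the two directions gives a bijection between compact tripotents of $E^{**}$ and non-empty norm closed faces of $\mathcal{B}_E$; the uniqueness of $u$ in the statement is inherited from the injectivity of $v\mapsto G_v$ at the bidual. Finally the anti-order property $u\le v\iff F_v\subseteq F_u$, and the facts that an arbitrary intersection of norm closed faces is again a norm closed face and corresponds to the infimum of the relevant compact tripotents (which stays compact) while suprema are computed dually, all descend from the corresponding statements for $\mathcal{B}_{E^{**}}$; this yields the claimed anti-order isomorphism of complete lattices.

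The step I expect to be the genuine obstacle is (ii), and within it the control of the weak$^*$-closure $\overline{F_u\cap E}^{w^*}$: one must show simultaneously that compactness of $u$ is necessary and sufficient for $F_u\cap E$ to be a non-empty norm closed face of $\mathcal{B}_E$ whose weak$^*$-closure recovers $G_u$. This is precisely where the C$^*$-algebraic machinery of Akemann and Pedersen---hereditary C$^*$-subalgebras and closed projections, \cite{AkPed92}---has to be replaced by its Jordan-triple counterpart: inner ideals together with the theory of compact and open tripotents developed in \cite{EdRutt88} and \cite{EdRu96}; and the delicate passage between $E$ and $E^{**}$ has to be made quantitative, for which estimates in the spirit of Propositions~\ref{p l 3.4 new} and \ref{p l 3.4 new quatitative} provide the natural technical engine.
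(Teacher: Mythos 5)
First, a framing remark: the paper does not prove this theorem at all; it is quoted, with the $\Box$ indicating an external citation, from \cite{EdFerHosPe2010}. So your proposal can only be measured against that reference, whose general route (pass to $E^{**}$, use the Edwards--R\"uttimann description of the weak$^*$-closed faces $G_v=(v+E_0^{**}(v))\cap\mathcal{B}_{E^{**}}$ of $\mathcal{B}_{E^{**}}$, and identify the tripotents that occur as the compact ones) your outline does follow in spirit. As written, however, it is a road map rather than a proof: every step you flag as ``to be handled by the appropriate machinery'' is precisely the content of the theorem, and at least one step as stated is circular.

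Concretely: (i) The single-generator case --- that the smallest norm-closed face of $\mathcal{B}_E$ containing a norm-one $a$ is $F_{u(a)}$ and that $\overline{F_{u(a)}}^{\,w^*}=G_{u(a)}$ --- is asserted, not proved; ``local spectral analysis in $E_a$'' cannot suffice because $F_{u(a)}$ involves the whole Peirce-zero space $E_0^{**}(u(a))$ and not just the commutative subtriple $E_a$, and Propositions \ref{p l 3.4 new} and \ref{p l 3.4 new quatitative} of the present paper concern minimal tripotents at distance $2$ and give no purchase here; what is really needed is a Urysohn/Kadison-transitivity type density statement. (ii) For a general face $F$, your lattice argument only yields $F\subseteq F_u$ with $u=\bigwedge_{a\in F}u(a)$; to obtain $F_u\subseteq F$ you identify $\overline{F}^{\,w^*}$ with the supremum face $G_u$, but this requires knowing that the weak$^*$-closure of a norm-closed face of $\mathcal{B}_E$ is again a face of $\mathcal{B}_{E^{**}}$ --- closures of faces need not be faces, and this is exactly the hard point the cited paper must work around, so your identification assumes what has to be shown. (iii) Compactness of $u$: you need that finite infima of compact-$G_{\delta}$ tripotents are compact (they need not be compact-$G_{\delta}$) and that the infimum of a decreasing net of compact tripotents is compact; these are nontrivial results of \cite{EdRutt88,EdRu96,FerPe06} invoked only by name. (iv) Surjectivity: the non-emptiness of $F_u=(u+E_0^{**}(u))\cap\mathcal{B}_E$ for an arbitrary compact tripotent $u$, i.e.\ the existence of norm-one elements of $E$ dominated by $u$ in the appropriate sense, is claimed via ``support tripotents approximating $u$'' with no construction, yet it is itself an approximation theorem. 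Each of (i)--(iv) is a genuine gap; filling them is essentially reproducing the work of \cite{EdFerHosPe2010}, which is why the present paper simply cites that source.
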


Let $e$ be a tripotent in a JB$^*$-triple $E$. We shall say that $e$ is a \emph{finite-rank tripotent} if $e$ can be written as a finite sum of mutually orthogonal minimal tripotents in $E$. An appropriate extension of Kadison's transitivity theorem for JB$^*$-triples, established in \cite[Theorems 3.3 and 3.4]{BuFerMarPe}, proves that each finite-rank tripotent $e$ in the bidual $E^{**}$ of $E$ is compact, and it is further known that \begin{equation}\label{eq Peirce and L(e,e) remains on E} P_2 (e) (E^{**}) = P_2 (e) (E) = \mathbb{C} e, \hbox{ and } P_1 (e) (E^{**}) = P_1 (e) (E),
\end{equation} where $P_j (e)$ stands for the $j-$th Peirce projection associated with $e$ in  $E^{**}$. Accordingly to these comments and the previous Theorem \ref{thm norm closed faces}, every maximal proper norm closed face of $\mathcal{B}_E$ is of the form \begin{equation}\label{eq max closed faces} F_e= (e + E_0^{**}(e)) \cap \mathcal{B}_E,
\end{equation} for a unique minimal tripotent $e$ in $E^{**}$. However this minimal tripotent $e$ need not be in $E$.\smallskip

We recall that elements $a,b$ in a JB$^*$-triple $E$ are said to be \emph{orthogonal} (written $a\perp b$) if $L(a,b) =0$ (see \cite[Lemma\ 1]{BurFerGarMarPe} for additional details). It follows from Peirce arithmetic that, for each tripotent $e\in E$, $E_2(e)\perp E_0(e)$. The relation ``being orthogonal'' can be applied to define a partial order in the set of tripotents in $E$ given by $u\leq e$ if $e-u$ is a tripotent orthogonal to $e$ (see, for example, \cite{FriRu85} or \cite{Horn87}). It is known that in a JBW$^*$-triple $M$ a tripotent $e\in M$ is minimal if and only if it is minimal for the order $\leq$.\smallskip

We are now in position to extend \cite[Theorem 2.3]{FerPe17b}.

\begin{theorem}\label{t surjective isometries map minimal tripotents into points of strong subdiff} Let $E$ and $B$ be JB$^*$-algebras, and suppose that $f: S(E) \to S(B)$ is a surjective isometry. Let $e$ be a minimal tripotent in $E$. Then $1$ is isolated in the triple spectrum of $f(e)$.
\end{theorem}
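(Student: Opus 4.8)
The plan is to read the statement entirely in terms of the facial structure, transport it through Proposition \ref{p preserves maximal convex subsets}, and then close the argument with the quantitative Peirce estimate of Proposition \ref{p l 3.4 new quatitative}. Write $b:=f(e)$. Since $e^{[2n-1]}=e$ for all $n$, the support tripotent of $e$ is $e$ itself, so the maximal proper norm closed face $F_e$ is \emph{also} the smallest norm closed face of $\mathcal{B}_E$ containing $e$. Moreover $e$ lies in \emph{no other} maximal proper face: if $e\in F_{e'}$ for a minimal tripotent $e'\in E^{**}$, then $e-e'\in E_0^{**}(e')$ is orthogonal to $e'$ and, using $e^{[2n-1]}=e$ together with $(a+c)^{[2n-1]}=a^{[2n-1]}+c^{[2n-1]}$ for $a\perp c$, one gets $(e-e')^{[2n-1]}=e-e'$; hence $e-e'$ is a tripotent orthogonal to $e'$, so $e'\le e$, and (since $e$ is minimal in $E^{**}$ by \eqref{eq Peirce and L(e,e) remains on E}) this forces $e'=e$. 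First I would record this, and then apply Proposition \ref{p preserves maximal convex subsets} to both $f$ and $f^{-1}$: $f(F_e)$ is a maximal proper norm closed face of $\mathcal{B}_B$, hence by Theorem \ref{thm norm closed faces} equals $F_u$ for a unique minimal tripotent $u\in B^{**}$, and by pulling back any hypothetical second maximal proper face through $b$ to a second such face through $e$ one sees that $F_u$ is the \emph{only} maximal proper norm closed face of $\mathcal{B}_B$ containing $b$.

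Next, since $e\in F_e$ we get $b\in F_u$, so $b=u+z$ with $z:=P_0^{**}(u)(b)\in B_0^{**}(u)$, $z\perp u$, $\|z\|\le 1$. The key reduction is that it suffices to prove $\|z\|<1$: in that case $z^{[2n-1]}\to 0$ in norm, so the support tripotent of $b$ equals $u$, the triple subsystem $B_b$ decomposes orthogonally as $\mathbb{C}u\oplus^{\ell_\infty}B_z$, and therefore, identifying $B_b$ with $C_0(\hbox{Sp}(b))$ and $b$ with the identity function, $\hbox{Sp}(b)=\{1\}\sqcup\hbox{Sp}(z)$ with $\hbox{Sp}(z)\subseteq[0,\|z\|]$; thus $1$ is isolated in $\hbox{Sp}(b)$. (Conversely $1$ being isolated forces the same splitting.) So the whole theorem is reduced to the norm estimate $\|P_0^{**}(u)(b)\|<1$.

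To obtain this I would argue by contradiction, assuming $\|z\|=1$, and bring in the antipodal face. Since $e$ is minimal, Proposition \ref{p l 3.4 new} shows $\{x\in S(E):\|e-x\|=2\}=F_{-e}=(-e+E_0(e))\cap\mathcal{B}_E$, a convex set isometric to $\mathcal{B}_{E_0(e)}$ in which $-e$ is the unique point lying within distance $1$ of the whole set. As $f$ is a surjective isometry, $\{y\in S(B):\|b-y\|=2\}=f(F_{-e})$ is, by Proposition \ref{p preserves maximal convex subsets} and Theorem \ref{thm norm closed faces}, a maximal proper face $F_w$ of $\mathcal{B}_B$ ($w$ minimal in $B^{**}$), carrying the distinguished point $f(-e)\in S(B)$. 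Taking $-b\in F_w$ and applying Proposition \ref{p l 3.4 new} inside the JB$^*$-triple $B^{**}$ to the minimal tripotent $w$ gives a second Peirce decomposition $b=-w+z'$ with $z'\perp w$, $\|z'\|\le 1$, and comparing the two decompositions with the location of the vertex $f(-e)$ in $F_w$ produces information linking $u$, $w$ and $z$. Finally, assuming $1$ were not isolated in $\hbox{Sp}(b)$ one builds a sequence $y_n\in F_w$ with $\|b-y_n\|\to 2$ that exhibits this non-isolation, sets $x_n:=f^{-1}(y_n)\in S(E)$ so that $\|e-x_n\|\to 2$, and invokes Proposition \ref{p l 3.4 new quatitative} to conclude $\|P_2(e)(e-x_n)\|\to 2$, i.e.\ the $E_2(e)$-component of $x_n$ is pinned near $-e$; feeding this back through the facial description of $f(F_{-e})$ and the Cartan-factor structure of $B$ (in particular \eqref{eq Peirce and L(e,e) remains on E} and the fact that finite-rank, hence minimal, tripotents of $B^{**}$ are compact) forces $\|z\|<1$.

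The main obstacle is exactly this last step: converting the purely metric hypothesis ``$1$ is not isolated in $\hbox{Sp}(f(e))$'' into a genuine contradiction. The facial data alone do not suffice (an element of $S(B)$ can lie in a unique maximal proper face while $1$ fails to be isolated in its spectrum), so one must use the full strength of the isometry together with Proposition \ref{p l 3.4 new quatitative} and the structure of the atomic JBW$^*$-triple $B$ and of the compact tripotents of $B^{**}$; getting the bookkeeping of the Peirce decompositions relative to $e$, $u$ and $w$ to interact correctly with the quantitative estimate is the technical heart of the proof.
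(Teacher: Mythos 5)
There is a genuine gap, and you have in fact located it yourself: everything after ``bring in the antipodal face'' is a description of hoped-for bookkeeping rather than an argument, and your closing paragraph concedes that the conversion of ``$1$ is not isolated in $\hbox{Sp}(f(e))$'' into a contradiction is precisely what is missing. Two further problems with the route you sketch. First, your reduction to the estimate $\|P_0^{**}(u)(f(e))\|<1$ replaces the theorem by a strictly stronger statement: that estimate forces the minimal tripotent $u$ to coincide with the whole support tripotent of $f(e)$, which is essentially the content of Theorem \ref{t surjective isometries between the spheres preserve minimal tripotents} (proved later \emph{using} the present theorem, and using atomicity of $B$, which is not assumed here); so this reduction points the proof in a circular, heavier direction. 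Second, the elements you pull back from the antipodal face $F_w=f(F_{-e})$ satisfy $\|e-f^{-1}(y_n)\|=2$ exactly, so Proposition \ref{p l 3.4 new} already pins their $P_2(e)$-part at $-e$, but this yields no tension with anything: no quantity of norm at most $1$ is produced against which the lower bound from Proposition \ref{p l 3.4 new quatitative} could collide.

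The paper closes exactly this gap by a functional-calculus construction inside $B_{f(e)}\cong C_0(\hbox{Sp}(f(e)))$, which is the ingredient your plan lacks. Assuming $1$ is not isolated, one builds for each $n$ two \emph{orthogonal} norm-one elements $\hat{x}_n\perp\hat{y}_n$ of $B_{f(e)}$: $\hat{y}_n$ a peak at $1$ (so its support tripotent dominates $u$, whence $\hat{y}_n\in F_u$) and $\hat{x}_n$ supported away from $1$ with $\|f(e)+\hat{x}_n\|=2-\tfrac1n$ (this is where non-isolation is used). Pulling back, $y_n:=f^{-1}(\hat{y}_n)\in F_e$ gives $y_n=e+P_0(e)(y_n)$, while $x_n:=f^{-1}(-\hat{x}_n)$ satisfies $\|e-x_n\|=2-\tfrac1n$ and, by orthogonality, $\|y_n-x_n\|=\|\hat{y}_n+\hat{x}_n\|=1$. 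Then $P_2(e)(y_n-x_n)=P_2(e)(e-x_n)$, whose norm exceeds $\tfrac32$ for large $n$ by Proposition \ref{p l 3.4 new quatitative}, contradicting $\|y_n-x_n\|=1$. Note that the face $F_{-e}$ and the second decomposition $b=-w+z'$ play no role: the decisive pair consists of one element \emph{in} $F_u$ and one element only \emph{approximately} antipodal to $f(e)$, chosen orthogonal to each other; without such a construction your outline cannot be completed.
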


\begin{proof} Arguing by contradiction, we assume that 1 is not isolated in $\hbox{Sp} (f(e))$. We shall identify $B_{f(e)}$ with $C_0 (\hbox{Sp} (f(e)))$.\smallskip

By Proposition \ref{p preserves maximal convex subsets}, Theorem \ref{thm norm closed faces}, and \eqref{eq max closed faces}, there exists a minimal tripotent $u\in B^{**}$ such that \begin{equation}\label{eq faces thm 1} f(F_e) = f((e + E_0(e)) \cap \mathcal{B}_E) ) = f((e + E_0^{**}(e)) \cap \mathcal{B}_E) ) = F_u= (u + B_0^{**}(e)) \cap \mathcal{B}_B.
\end{equation}

For each natural $n,$ we define $\hat{x}_n, \hat{y}_n$ the elements in $B_{f(e)}$ given by:
$$\hat{x}_n(t):=\left\{%
\begin{array}{ll}
    \frac{t}{t_0}, & \hbox{if $0\leq t\leq 1-\frac1n$} \\
    \hbox{affine}, & \hbox{if $1-\frac1n\leq t\leq 1-\frac{1}{2n}$} \\
    0, & \hbox{if $1-\frac{1}{2n}\leq t\leq 1$} \\
\end{array}%
\right. \  ; \hat{y}_n(t):=\left\{%
\begin{array}{ll}
    0, & \hbox{if $0\leq t\leq 1-\frac{1}{2n}$} \\
    \hbox{affine}, & \hbox{if $1-\frac{1}{2n}\leq t\leq 1$} \\
    1, & \hbox{if $ t=1$.} \\
\end{array}%
\right.$$

Clearly $\hat{x}_n,\hat{y}_n\in S(B)$ and $\hat{x}_n\perp\hat{y}_n$. We claim that $\hat{y}_n\in (u+B^{**}_0 (u))\cap \mathcal{B}_B = F_{u}$. Indeed, the support tripotent of $f(e)$ is bigger than or equal to $u$ in $B^{**}$, that is, $f(e) = u + P_0(u) (f(e))$ in $B^{**}$ (see \eqref{eq faces thm 1}). Thus, the support tripotent of $\hat{y}_n $ also is bigger than or equal to $u$ in $B^{**}$, that is, $\hat{y}_n = u + P_0(u) (\hat{y}_n) \in F_u$.\smallskip

Therefore, denoting by $x_n=f^{-1}(-\hat{x}_n)\in S(E)$ and $y_n=f^{-1}(\hat{y}_n)\in F_e=(e+E^{**}_0(e))\cap \mathcal{B}_E =(e+E_0(e))\cap \mathcal{B}_E$, we have $$1=\|\hat{x}_n+\hat{y}_n\|=\|\hat{y}_n- (-\hat{x}_n)\|=\|y_n-x_n\|,$$  $$2-\frac1n=\|f(e)+\hat{x}_n\|=\|e-x_n\|,$$ and $y_n = e + P_0 (e) (y_n),$ for every natural $n$.\smallskip

Applying Proposition \ref{p l 3.4 new quatitative} we can find a natural $n_0$ such that $$\|P_2 (e) (e- x_{n_0})\|>\frac32 >1.$$

Finally, the inequalities $$1\geq \|P_2(e) (y_{n_0} -x_{n_0} )\|=\|P_2(e)(e+P_0(e) (y_{n_0})-x_{n_0})\|=\|P_2(e)(e-x_{n_0})\|>\frac32,$$ give the desired contradiction.\end{proof}

In our list of ingredients to extend the results in \cite{FerPe17b}, the next goal is a generalization of \cite[Lemma 2.4]{FerPe17b}.

\begin{lemma}\label{l minimal in E** not in E}
Let $E$ be a JB$^*$-triple. Then every minimal tripotent $u$ in $E^{**}\backslash E$ is orthogonal to all minimal tripotents in $E$.
\end{lemma}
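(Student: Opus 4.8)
The plan is to argue by contradiction. Suppose $u$ is a minimal tripotent in $E^{**}\setminus E$ and that there is a minimal tripotent $e$ in $E$ which is \emph{not} orthogonal to $u$. First I would reduce to the case where the two minimal tripotents ``interact strongly'': since $u$ and $e$ are both minimal, the JBW$^*$-subtriple of $E^{**}$ generated by $u$ and $e$ is finite dimensional, and non-orthogonality of two minimal tripotents in such a small triple means $u \notperp e$ forces either $u = \lambda e$ for a unimodular $\lambda$ (impossible, since then $u\in E$), or $P_2(e)(u)\neq 0$, or $P_1(e)(u)\neq 0$. The heart of the matter is that in either of these situations the element $P_2(e)(u) + P_1(e)(u)$ is a nonzero element of $P_2(e)(E^{**}) \oplus P_1(e)(E^{**})$, and by \eqref{eq Peirce and L(e,e) remains on E} (valid because $e$ is a minimal, hence finite-rank, tripotent in $E^{**}$) we have $P_2(e)(E^{**}) \oplus P_1(e)(E^{**}) = P_2(e)(E) \oplus P_1(e)(E) \subseteq E$.

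Next I would exploit this to locate $u$ inside $E$, contradicting $u\in E^{**}\setminus E$. The key computation is that a minimal tripotent $u$ which is not orthogonal to $e$ satisfies a rigid relation with its Peirce components relative to $e$: writing $u = P_2(e)(u) + P_1(e)(u) + P_0(e)(u)$, the Jordan identity $\{u,u,u\} = u$ together with Peirce arithmetic and minimality of $e$ (so $P_2(e)(u) = \alpha e$ for a scalar $\alpha$) should force $P_0(e)(u)$ to be expressible in terms of the lower Peirce pieces, or — more directly — force $u$ to lie in the norm closure of the triple generated by $e$ and the element $P_1(e)(u) \in E$. Concretely, since $u$ is minimal, $E^{**}_2(u) = \mathbb{C} u$; I would test this against $e$: the projection $P_2(u)(e)$ is a scalar multiple $\mu u$ of $u$, and if $\mu \neq 0$ we obtain $u = \mu^{-1} P_2(u)(e)$. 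While $P_2(u)$ is a priori only a map on $E^{**}$, one can instead run the argument symmetrically: if $e \not\perp u$ then, by the same finite-dimensional reduction, there is a minimal tripotent comparison showing $u$ and $e$ generate a spin factor or a $2\times 2$ matrix algebra inside $E^{**}$, in which $u$ is a norm-limit of polynomials in $e$ and the (genuinely $E$-valued) element $P_1(e)(u)$ under the triple product — hence $u\in E$.

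The cleanest route is probably the following: assume $e\not\perp u$; then $v := P_1(e)(u) + P_2(e)(u) \in E$ by \eqref{eq Peirce and L(e,e) remains on E}, and $v \neq 0$. One shows, using that both $e$ and $u$ are minimal, that $\|e - u\| < 2$ is \emph{impossible} unless additional structure is present; more usefully, one invokes that a minimal tripotent $u$ orthogonal to $P_0(e)$'s complement lies in the Peirce-$2$ plus Peirce-$1$ space of $e$, i.e.\ $P_0(e)(u) = 0$, whence $u = v \in E$. To see $P_0(e)(u) = 0$: the element $w := P_0(e)(u)$ satisfies $w \perp e$, and if $w\neq 0$ then $u = v + w$ with $v\perp w$ and $v,w$ both nonzero would contradict minimality of $u$ for the order $\leq$ (a nonzero orthogonal decomposition of a minimal tripotent is impossible, as recalled just before the statement — in a JBW$^*$-triple a tripotent is minimal iff it is minimal for $\leq$), \emph{provided} one first checks that $v$ and $w$ can be renormalized to tripotents below $u$. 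This last point — upgrading the Peirce decomposition $u = v + w$ of the minimal tripotent $u$ relative to $e$ into an orthogonal decomposition by tripotents, so as to invoke minimality of $u$ — is the main obstacle, and it requires a careful use of Peirce arithmetic for $u$ itself (passing through $P_2(u)$, $P_1(u)$, $P_0(u)$ on $E^{**}$) combined with the relations $\{E_2(e),E_0(e),E\}=\{E_0(e),E_2(e),E\}=0$; once $v\perp w$ are seen to be (scalar multiples of) mutually orthogonal tripotents summing to $u$, minimality forces one of them to vanish, and since $e\not\perp u$ forces $v\neq 0$, we get $w = 0$ and $u = v\in E$, the desired contradiction. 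I would then note that the statement for $e$ not necessarily minimal but merely a minimal tripotent in $E$ is exactly what we proved, and that orthogonality to \emph{all} minimal tripotents of $E$ follows since $e$ was arbitrary.
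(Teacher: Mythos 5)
There is a genuine gap, and in fact the route you propose cannot work as stated. Your argument hinges on two claims: (i) that $v:=P_2(e)(u)+P_1(e)(u)$ and $w:=P_0(e)(u)$ are orthogonal, and (ii) that $e\not\perp u$ forces $w=P_0(e)(u)=0$, so that $u=v\in P_2(e)(E)\oplus P_1(e)(E)\subseteq E$. Claim (i) is false already at the level of Peirce arithmetic: only $E_2(e)\perp E_0(e)$ holds; the Peirce-$1$ space is \emph{not} orthogonal to the Peirce-$0$ space (indeed $\J{E_1(e)}{E_0(e)}{E_0(e)}\subseteq E_1(e)$ need not vanish), and the two components $v,w$ of $u$ need not be tripotents, so minimality of $u$ for the order $\leq$ cannot be invoked. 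More seriously, claim (ii) is simply false as a statement about minimal tripotents: in $M_2(\mathbb{C})$ the minimal tripotent $u=\frac12(e_{11}+e_{12}+e_{21}+e_{22})$ is not orthogonal to $e_{11}$, yet $P_0(e_{11})(u)=\frac12 e_{22}\neq 0$. Your alternative suggestion, that $u$ lies in the closed subtriple generated by $e$ and $P_1(e)(u)$, also fails: for $u=\beta e_{12}+\delta e_{22}$ (minimal, not orthogonal to $e_{11}$ when $\beta\neq0$) the subtriple generated by $e_{11}$ and $e_{12}$ is their linear span and does not contain $e_{22}$. So the ``main obstacle'' you flag is not a technical point to be patched; the intermediate statements are false, and the contradiction $u\in E$ cannot be reached along this path.

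The paper's proof handles precisely the configuration your argument cannot: it first invokes the structural result of \cite[Section 3, $(\checkmark.1)$ and $(\checkmark.2)$]{FerPe17}, which says that a minimal tripotent $u$ not orthogonal to the minimal tripotent $e$ is supported on a quadrangle $(e,v_2,v_3,v_4)$ or a trangle $(e,v,\tilde e)$ in the atomic part of $E^{**}$, with prescribed coefficients. The Peirce-$0$ piece of $u$ relative to $e$ (the $v_3$, resp.\ $\tilde e$, term) is then shown to lie in $E$ not through $e$ at all, but by a \emph{second} application of the Kadison-transitivity identities \eqref{eq Peirce and L(e,e) remains on E} to the tripotent $v_2$ (resp.\ $v$), which has already been placed in $E$ via $P_1(e)(E^{**})=P_1(e)(E)$. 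This two-stage transfer into $E$ is the essential mechanism; some substitute for the quadrangle/trangle description (or an equivalent fine analysis of two non-orthogonal minimal tripotents in a Cartan factor) is unavoidable, and it is exactly what is missing from your proposal.
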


\begin{proof} Suppose, contrary to what we want to prove, that there exist $e\in E$ such that $u$ is not orthogonal to $e$. The atomic part of $E^{**}$ is precisely the JBW$^*$-subtriple of $E^{**}$ generated by all minimal tripotents in $E^{**}$ (see \cite[Theorem 2]{FriRu85}) and coincides with an $\ell_{\infty}$-sum of a certain family of Cartan factors (compare \cite[Proposition 2]{FriRu86}).\smallskip

We are in position to apply \cite[Section 3, page 16, $(\checkmark.1)$ and $(\checkmark.2)$]{FerPe17} to assure that one of the following statements holds:
\begin{enumerate}[$(a)$] \item There exists a quadrangle $(v_1,v_2,v_3,v_4)$ (that is, $v_{1}\bot v_{3}$, $v_{2}\bot v_{4}$, $v_{j}\in E^{**}_1 (v_{j+1})$, $v_{j+1}\in E^{**}_1 (v_{j})$ for every $j\in\{1,2,3\}$,  $v_{1}\in E^{**}_1 (v_{4})$, $v_{4}\in E^{**}_1 (v_{1})$ and $v_{4}=2 \J
{v_{1}}{v_{2}}{v_{3}}$) of minimal tripotents in a Cartan factor contained in the atomic part of $E^{**}$ and complex numbers $\alpha, \beta, \gamma, \delta$ such that $e=v_1$, $|\alpha|^2 + |\beta|^2+ |\gamma|^2 +|\delta|^2=1, $ $\alpha \delta=\beta \gamma $ and $u=\alpha e+\beta v_2+\delta v_3+\gamma v_4$;
\item There exists a trangle $(e,v,\tilde{e})$ (i.e. $e\bot \tilde e$, $e, \tilde e \in E^{**}_{2} (v)$, $v\in E^{**}_{1} (e)$,  $v\in E^{**}_{1} (\tilde e)$ and $e = \{v, \tilde e,v\}$) with $v$ a rank-2 tripotent and $\tilde{e}$ a minimal tripotent in a Cartan factor contained in the atomic part of $E^{**}$ and complex numbers $\alpha, \beta, \delta$ such that $|\alpha|^2 + 2 |\beta|^2+|\delta|^2=1, $ $\alpha \delta=\beta^2 $ and $u=\alpha e+\beta v+\delta \tilde{e}$.
\end{enumerate}

By assumptions $e\not\perp  u$, and thus $\delta\neq 1$. The minimality of $e$ in $E$ can be combined with Kadison's trasitivity theorem (compare \cite[Theorems 3.3 and 3.4]{BuFerMarPe} and \eqref{eq Peirce and L(e,e) remains on E}) to deduce that $P_1 (e) (E^{**}) \subseteq E$.\smallskip

In the case $(a)$, we have $v_2,v_4\in E^{**}_1 (e) = P_1 (e) (E^{**}) = P_1 (e) (E) \subseteq E$ because $e\in E$. Another application of Kadison's trasitivity theorem \eqref{eq Peirce and L(e,e) remains on E} shows that $v_3 \in E^{**}_1 (v_2) = P_1 (v_2) (E^{**}) = P_1 (v_2) (E) \subseteq E$ because $v_2\in E$. Therefore, $v_1,v_2,v_3$ and $v_4$ belong to $E$ and hence $u =\alpha e+\beta v_2+\delta v_3+\gamma v_4 \in E$, which is impossible.\smallskip

In case $(b)$ we have $v\in E^{**}_1 (e) = P_1 (e) (E^{**}) = P_1 (e) (E) \subseteq E$ because $e\in E$. Since $v$ is a rank-2 tripotent, Kadison's trasitivity theorem \eqref{eq Peirce and L(e,e) remains on E} proves that $\tilde e \in E^{**}_1 (v) = P_1 (v) (E^{**}) = P_1 (v) (E) \subseteq E$ because $v\in E$. This shows that $u=\alpha e+\beta v+\delta \tilde{e}\in E$ leading us to a contradiction.
\end{proof}

Our next result is the real core of the study of surjective isometries between the unit spheres of two atomic JBW$^*$-triples.

\begin{theorem}\label{t surjective isometries between the spheres preserve minimal tripotents}
Let $E$ and $B$ be atomic JBW$^*$-triples, and suppose that $f: S(E) \to S(B)$ is a surjective isometry. Then, for each minimal tripotent $e$ in $E$ there exists a unique minimal tripotent $u$ in $B$ such that $f(e)= u$. Moreover, there exists a real linear surjective isometry $T_e : E_0 (e) \to B_0(u)$ such that $$f(e +x )= u + T_e (x),$$ for every $x\in \mathcal{B}_{E_0 (e)}$, and the restriction of $f$ to the maximal norm-closed face $F_e = e+ \mathcal{B}_{E_0(e)}$ is an affine function.
\end{theorem}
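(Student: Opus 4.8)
The plan is to first locate, for a fixed minimal tripotent $e\in E$, the minimal tripotent $u\in B^{**}$ produced by Proposition \ref{p preserves maximal convex subsets}, Theorem \ref{thm norm closed faces} and \eqref{eq max closed faces}, so that $f(F_e)=F_u=(u+B_0^{**}(u))\cap\mathcal B_B$. The first substantive step is to show $u\in B$. For this I would combine Theorem \ref{t surjective isometries map minimal tripotents into points of strong subdiff}, which tells us that $1$ is isolated in $\hbox{Sp}(f(e))$, with Lemma \ref{l minimal in E** not in E}. Concretely, if $u\notin B$, then by Lemma \ref{l minimal in E** not in E} $u$ is orthogonal to every minimal tripotent of $B$; since $B$ is atomic, it is the weak$^*$-closure of the span of its minimal tripotents, and one deduces that $u$ is orthogonal to all of $B$, forcing $B_0^{**}(u)\supseteq B$, i.e.\ $f(F_e)=\mathcal B_B$, contradicting that $F_u$ is a proper face. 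Thus $u\in B$, and then $F_u=(u+B_0(u))\cap\mathcal B_B=u+\mathcal B_{B_0(u)}$ by \eqref{eq Peirce and L(e,e) remains on E}; by symmetry (applying the same argument to $f^{-1}$), $f$ restricts to a surjective isometry $F_e\to F_u$. The remaining task is to upgrade the (so far only set-theoretic) identification $f(e)=u$ and to linearize $f$ on the face.

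Next I would prove $f(e)=u$ as elements of $B$, not merely that $e$ maps into the face $F_u$. Since $e$ is the unique element of $F_e$ at distance $\le 1$ from every point of $F_e$ — equivalently $e$ is the unique point $w\in F_e$ with $e-w\perp e$, namely the barycentre/geometric centre characterised by $\|w\pm x\|\le 1$ for all $x\in\mathcal B_{E_0(e)}$ — and $f$ is a surjective isometry $F_e\to F_u$, the image $f(e)$ must be the corresponding distinguished point of $F_u$, which is exactly $u$. (Here one uses that $e+\mathcal B_{E_0(e)}$ has $e$ as its unique ``norm-one centre'': $e$ is the only element $w$ of the face with $\|w+x\|\le 1$ for all $x$ with $\|x\|\le 1$, $x\in E_0(e)$; an isometry preserves this property.) This gives the first assertion, with uniqueness of $u$ immediate from $F_u=F_{u'}\Rightarrow u=u'$ in Theorem \ref{thm norm closed faces}.

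For the linearisation, write $g:=f|_{F_e}-u:\mathcal B_{E_0(e)}\to\mathcal B_{B_0(u)}$, $g(x):=f(e+x)-u$; this is a surjective isometry between the closed unit balls of the Banach spaces $E_0(e)$ and $B_0(u)$ with $g(0)=0$. By the Mazur--Ulam theorem in the form valid for unit balls (or by first extending $g$ to the whole spaces via the standard radial/positive-homogeneity argument, using that $F_e=e+\mathcal B_{E_0(e)}$ is a maximal face and that $g$ sends antipodal points to antipodal points), $g$ is the restriction of a surjective real-linear isometry $T_e:E_0(e)\to B_0(u)$; setting $f(e+x)=u+T_e(x)$ gives the claimed formula, and affinity of $f|_{F_e}$ follows at once. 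The main obstacle I anticipate is the step $u\in B$: it requires carefully turning the local spectral information from Theorem \ref{t surjective isometries map minimal tripotents into points of strong subdiff} and the global dichotomy of Lemma \ref{l minimal in E** not in E} into the conclusion that orthogonality of $u$ to all minimal tripotents of the atomic triple $B$ forces $u\perp B$, together with checking that ``$F_u$ proper'' genuinely rules this out; a secondary delicate point is justifying that the distinguished-point characterisation of $e$ inside $F_e$ is isometry-invariant, so that $f(e)=u$ rather than merely $f(e)\in F_u$.
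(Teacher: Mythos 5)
The crux of the theorem is the claim $u\in B$, and this is exactly where your argument breaks down. You assert that, since $u$ is orthogonal to every minimal tripotent of $B$ (Lemma \ref{l minimal in E** not in E}) and $B$ is the weak$^*$-closure of the linear span of its minimal tripotents, $u$ must be orthogonal to all of $B$. That deduction is false: the density is with respect to $\sigma(B,B_*)$, whereas for a fixed $u\in B^{**}$ the set $\{x\in B : x\perp u\}$ is only a norm-closed (hence $\sigma(B,B^*)$-closed) subspace of $B$; it need not be $\sigma(B,B_*)$-closed, so orthogonality to $u$ does not pass to $\sigma(B,B_*)$-limits. Concretely, take $B=B(H)$, an atomic JBW$^*$-triple, and let $u$ be the support projection in $B(H)^{**}$ of a singular pure state: $u$ is a minimal tripotent in $B^{**}\setminus B$, it is orthogonal to every minimal tripotent of $B$ (in accordance with Lemma \ref{l minimal in E** not in E}), hence to all of $K(H)$, and yet $u I_H^{*}=u\neq 0$, so $u$ is not orthogonal to the identity, i.e. not orthogonal to $B$. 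Thus the orthogonality dichotomy alone yields no contradiction; ruling out such a $u$ as the tripotent representing $f(F_e)$ is precisely the hard content of the theorem. (A secondary slip: even if $B\subseteq B_0^{**}(u)$ did hold, the conclusion would be $F_u=(u+B_0^{**}(u))\cap\mathcal{B}_B=\emptyset$, since every element of $u+B_0^{**}(u)$ has Peirce-2 part equal to $u\neq 0$ while elements of $B$ would have Peirce-2 part $0$; not $F_u=\mathcal{B}_B$. The contradiction with $f(e)\in F_u$ would still be available, but the density step feeding it is invalid.) Note also that in your plan Theorem \ref{t surjective isometries map minimal tripotents into points of strong subdiff} is mentioned but never actually used, whereas it is the key tool: the paper uses the isolatedness of $1$ in $\hbox{Sp}(f(e))$ to obtain the support tripotent $\hat{w}=\chi_{\{1\}}(f(e))$ \emph{inside} $B$ with $u\leq\hat{w}$, picks (by atomicity) a minimal tripotent $\hat{w}_0\leq\hat{w}$ in $B$, gets $u\perp\hat{w}_0$ from Lemma \ref{l minimal in E** not in E} so that $f(e)-\hat{w}_0$ still lies in $F_u$, and then applies Proposition \ref{p l 3.4 new} to $f^{-1}(-\hat{w}_0)$ together with the face correspondence applied to $f^{-1}(f(e)-\hat{w}_0)$, reaching the contradiction $1=\|2e+P_0(e)(z-w_0)\|=2$. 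Some argument of this type, which exploits $f$ itself and not merely the orthogonality dichotomy, is missing from your proposal.

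Once $u\in B$ is secured, the remainder of your plan is essentially correct and close to the paper's: the paper conjugates $f|_{F_e}$ by the translations $\mathcal{T}_e$, $\mathcal{T}_u$ and applies Mankiewicz's theorem to the resulting surjective isometry $\mathcal{B}_{E_0(e)}\to\mathcal{B}_{B_0(u)}$, which produces $T_e$, the formula $f(e+x)=u+T_e(x)$, the affinity of $f|_{F_e}$, and $f(e)=u$ all at once. Your separate characterisation of $e$ as the unique point of $F_e$ at distance at most $1$ from every point of the face is a valid metric argument but is not needed, and what you call ``Mazur--Ulam for unit balls'' is really Mankiewicz's theorem; the radial-extension alternative you sketch would require additional justification.
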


\begin{proof} Combining Proposition \ref{p preserves maximal convex subsets}, Theorem \ref{thm norm closed faces}, and \eqref{eq max closed faces}, we find a minimal tripotent $u\in B^{**}$ such that \begin{equation}\label{eq faces thm 2} f(F_e) = f((e + E_0(e)) \cap \mathcal{B}_E) )= f((e + E_0^{**}(e)) \cap \mathcal{B}_E) ) = F_u= (u + B_0^{**}(e)) \cap \mathcal{B}_B.
\end{equation}

We claim that $u \in B$. If on the contrary $u\in B^{**}\backslash B$, Lemma \ref{l minimal in E** not in E} implies that $u\perp v$ for every minimal tripotent $v\in B$. Theorem \ref{t surjective isometries map minimal tripotents into points of strong subdiff} proves that $1$ is an isolated point in the triple spectrum of $f(e)$.\smallskip

As before, we shall identify $B_{f(e)}$ with $C_0 (\hbox{Sp} (f(e)))$. Since $1$ is isolated in $\hbox{Sp} (f(e))$, the element $\hat{w}=\chi_{\{1\}} (f(e))$ is a tripotent in $B$ (actually $\hat{w}$ is the support tripotent of $f(e)$). Having in mind that $f(e) \in F_u$, we have $f(e) = u + P_0(u) (f(e))$ and hence $u \leq \hat{w}$.\smallskip

Since $B$ is atomic, we can find a minimal tripotent $\hat{w}_0$ in $B$ satisfying $\hat{w}_0\leq \hat{w}$. By Lemma \ref{l minimal in E** not in E} and the assumptions we have $u\perp \hat{w}_0$, and thus $u\leq \hat{w}-\hat{w}_0$. Clearly, $$2=\|f(e)+\hat{w_0}\|=\|f(e)-(-\hat{w}_0)\| =\|e-f^{-1}(-\hat{w}_0)\|$$ and, by Proposition \ref{p l 3.4 new} \cite[Proposition 2.2]{FerPe17} or Proposition \ref{p l 3.4 new quatitative}, we have  $$w_0=f^{-1}(-\hat{w}_0)= -e +P_0(e) (w_0).$$ Having in mind that $f(e)-\hat{w}_0 \in (u + B^{**}_0 (u))\cap \mathcal{B}_B$, we deduce that the element $z=f^{-1}(f(e)-\hat{w}_0)$ belongs to $(e+E^{**}_0(e))\cap \mathcal{B}_E$, and thus $z= e + P_0(e) (z)$, and therefore  $$1=\|f(e)\|=\|f(e)-\hat{w}_0+\hat{w}_0\|=\|(f(e)-\hat{w}_0)-(-\hat{w}_0)\|=\|z-w_0\|$$ $$=\|e+ P_0(e)(z)-(-e+P_0(e)(w_0))\|= \| 2e + P_0(e)(z-w_0)\|= 2,$$ which is impossible.\smallskip

We have therefore shown that $u\in B$. We shall now mimic the arguments in the proof of \cite[Proposition 3.1]{PeTan16}, details are enclosed for completeness reasons. Since $$f\left( e + \mathcal{B}_{E_0(e)} \right)=f((e + E_0(e)) \cap \mathcal{B}_E ) = F_u = (u + B_0(u)) \cap \mathcal{B}_B = u + \mathcal{B}_{B_0(u)},$$ denoting by $\mathcal{T}_{x_0}$ the translation with respect to $x_0$ (i.e. $\mathcal{T}_{x_0} (x) = x+x_0$), the mapping $f_{e} = \mathcal{T}_{u}^{-1}|_{F_u} \circ f|_{F_e} \circ \mathcal{T}_{e}|_{\mathcal{B}_{E_0(e)}}$ is a surjective isometry from $\mathcal{B}_{E_0(e)}$ onto $\mathcal{B}_{B_0(u)}$. Mankiewicz's theorem (see \cite{Mank1972}) implies the existence of a surjective real linear isometry $T_{e} : E_0(e) \to  B_0(u)$ such that $f_{e} = T_{e}|_{S(E_0(e))}$ and hence $$ f(e + x) = u + T_e (x), \hbox{ for all $x$ in } \mathcal{B}_{E_0(e)}.$$ In particular $f(e) = u$. Now, since $$f|_{F_e}= \mathcal{T}_{u}|_{\mathcal{B}_{B_0(u)}} \circ f_{e} \circ \mathcal{T}_{e}^{-1}|_{F_e}= \mathcal{T}_{u}|_{\mathcal{B}_{B_0(u)}} \circ T_{e} \circ \mathcal{T}_{e}^{-1}|_{F_e},$$ we deduce that $f|_{F_e}$ is real affine function.\smallskip
\end{proof}

We recall now some terminology taken from \cite{BuChu}. Let $K(H,H')$ be the space of all compact linear operators between two complex Hilbert spaces. We shall write $K(H)$ instead of $K(H,H)$. If $C_j$ is a Cartan factor of type $j\in \{1,2,3,4,5,6\}$, we define $K_1 = K(H,H')$ for $C_1= L(H,H')$, $K_j=C_j\cap K(H)$ for $j=2,3$, and in the remaining cases $K_4 = C_4,$ $K_5 = C_5$, and $K_6 = C_6$. The JB$^*$-triples $K_1,K_2,\ldots, K_6$ are called elementary JB$^*$-triples.  Suppose $E= \bigoplus_i^{\infty} C_i$ is an atomic JBW$^*$-triple, where each $C_i$ is a Cartan factor. It is known that the $c_0$-sum $K(E) = \bigoplus_i^{c_0} K_i$ is a weakly compact JB$^*$-triple and a triple ideal of $E$ with $K(E)^{**} = E$ (see \cite[Remark 2.6]{BuChu}). JB$^*$-triples of the form $K(E)$ are called weakly compact JB$^*$-triples (see \cite{BuChu}). It is further known that every element $x$ in a weakly compact JB$^*$-triple $K(E)$ can be written as a norm convergent (possibly finite) sum $\displaystyle x = \sum_{n=1} \lambda_n e_n$, where $e_n$ are mutually orthogonal minimal tripotents in $K(E)$ (and in $E$), and $(\lambda_n)\subseteq \mathbb{R}_0^+$ with $(\lambda_n)\to 0$ (see \cite[Remark 4.6]{BuChu}).\smallskip

Now, just a final technical step is separating us from our main goal.

\begin{proposition}\label{p properties for finite linear combinations of minimal tripotents} Let $E$ and $B$ be atomic JBW$^*$-triples, and suppose that $f: S(E) \to S(B)$ is a surjective isometry. Then the following statements hold:\begin{enumerate}[$(a)$]\item  For each minimal tripotent $e\in E$ we have $T_e(v) = f(v)$ for every minimal tripotent $v\in E_0(e)$, where $T_e : E_0(e) \to B_0(f(e))$ is the surjective real linear isometry given by Theorem \ref{t surjective isometries between the spheres preserve minimal tripotents};
\item Let $e_1,\ldots,e_n$ be mutually orthogonal minimal tripotents in $E$, and let $\lambda_1,\ldots,\lambda_n$ be positive real numbers with $\max_j \lambda_j=1$. Then $$f\left(\sum_{j=1}^n \lambda_j e_j\right) = \sum_{j=1}^n \lambda_j f\left(e_j\right);$$
\item $f$ maps $S(K(E))$ onto $S(K(B))$;
\item For each minimal tripotent $e\in E$ we have $f(u) =  T_e (u)$ for every non-zero tripotent $u\in E_0(e)$;
\item Let $v_1,\ldots,v_n$ be mutually orthogonal non-zero tripotents in $E$, and let $\lambda_1,\ldots,\lambda_n$ be positive real numbers with $\max_j \lambda_j=1$. Then $$f\left(\sum_{j=1}^n \lambda_j v_j\right) = \sum_{j=1}^n \lambda_j f\left(v_j\right).$$
\end{enumerate}
\end{proposition}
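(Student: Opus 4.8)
The plan is to prove the five statements in order, using each one as an ingredient for the next, since they are logically cascading. Statement $(a)$ is the base case: I would fix a minimal tripotent $e\in E$ and a minimal tripotent $v\in E_0(e)$, and apply Theorem \ref{t surjective isometries between the spheres preserve minimal tripotents} \emph{twice} — once to $e$ and once to $v$. From $f(e+x) = f(e) + T_e(x)$ for all $x\in\mathcal B_{E_0(e)}$, putting $x$ a scalar multiple of $v$ and using continuity/affineness along the segment $[0,v]$, one wants to identify $T_e(v)$ with $f(v)$. The key observation is that the face $F_v$ and the face $F_e$ share the element $e+v$ (both $e\in E_0(v)$ and $v\in E_0(e)$, so $e+v$ lies in $F_e\cap F_v$), and $f$ restricted to each face is affine with the explicit form from Theorem \ref{t surjective isometries between the spheres preserve minimal tripotents}. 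Evaluating the two affine formulas at $e+v$ and comparing, together with the fact that $f(e)=u=f(e)$ is a minimal tripotent orthogonal to $f(v)$, forces $T_e(v)=f(v)$.

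For $(b)$ I would proceed by induction on $n$. The element $\sum_{j}\lambda_j e_j$ has norm $\max_j\lambda_j = 1$; after relabeling assume $\lambda_1=1$, so $e_1$ is a minimal tripotent and $\sum_{j\ge 2}\lambda_j e_j \in \mathcal B_{E_0(e_1)}$. Then Theorem \ref{t surjective isometries between the spheres preserve minimal tripotents} gives $f(\sum_j\lambda_j e_j) = f(e_1) + T_{e_1}(\sum_{j\ge2}\lambda_j e_j)$. Since $T_{e_1}$ is real linear, this equals $f(e_1) + \sum_{j\ge2}\lambda_j T_{e_1}(e_j) = f(e_1) + \sum_{j\ge 2}\lambda_j f(e_j)$ by part $(a)$, which is what we want. (If $\max_j\lambda_j$ is attained at several indices, pick any one of them as $e_1$.) Statement $(c)$ then follows from the structure of weakly compact JB$^*$-triples recalled just before the Proposition: every $x\in S(K(E))$ is a norm-convergent sum $\sum_n\lambda_n e_n$ with $\lambda_n\downarrow 0$ and $\max\lambda_n = \|x\| = 1$; applying $(b)$ to the finite truncations $\sum_{n\le N}\lambda_n e_n$ (whose norms are $1$ for $N$ large, or rescaled to norm $1$) and passing to the limit using continuity of $f$ shows $f(x) = \sum_n\lambda_n f(e_n)\in S(K(B))$, and surjectivity of $f|_{S(K(E))}$ onto $S(K(B))$ follows by applying the same reasoning to $f^{-1}$.

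For $(d)$, let $e\in E$ be minimal and $u\in E_0(e)$ a non-zero tripotent; write $u = \sum_{j=1}^m v_j$ as a finite or countable sum of mutually orthogonal minimal tripotents in $E_0(e)$ (possible since $E$ is atomic and these minimal tripotents lie in $E_0(e)$, which is itself an atomic JBW$^*$-triple). By real linearity of $T_e$, $T_e(u) = \sum_j T_e(v_j) = \sum_j f(v_j)$ using $(a)$; on the other hand, applying $(b)$ \emph{inside the problem} — more precisely, comparing with $f(u)$ via the relation $f(e+tu) = f(e) + tT_e(u)$ for small $t>0$ and using that $e+u$ lies in both $F_e$ and the face determined by $u$ — one identifies $\sum_j f(v_j)$ with $f(u)$. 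The cleanest route: $u$ itself is a norm-one element of the weakly compact triple $K(E_0(e))$, so $(c)$ (applied to the surjective isometry $f_e$ between the spheres of $E_0(e)$ and $B_0(f(e))$, which is induced by $f$ via Theorem \ref{t surjective isometries between the spheres preserve minimal tripotents}) combined with $(b)$ gives $f(u) = \sum_j f(v_j) = T_e(u)$. Finally $(e)$ follows from $(d)$ exactly as $(b)$ followed from $(a)$: normalize so $\lambda_1 = 1$, write $\sum_j\lambda_j v_j = v_1 + \sum_{j\ge2}\lambda_j v_j$ with $v_1$ a non-zero (indeed complete-in-its-support) tripotent and $\sum_{j\ge2}\lambda_j v_j\in\mathcal B_{E_0(v_1)}$, apply the analogue of Theorem \ref{t surjective isometries between the spheres preserve minimal tripotents} for the maximal face associated to $v_1$ — or rather decompose $v_1$ further into minimal tripotents and reduce to the minimal-tripotent case — and invoke real linearity of the relevant $T$ together with $(d)$.

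The main obstacle I anticipate is $(a)$: making the "compare the two affine formulas on $F_e\cap F_v$" argument fully rigorous. One must be careful that $F_e$ and $F_v$ genuinely meet (they do, at $e+v$, and more generally along $e+\mathcal B_{E_0(e)\cap E_0(v)}$), that the two expressions $f(e)+T_e(v)$ and $f(v)+T_v(e)$ are both computing $f(e+v)$, and that this single scalar/linear-algebra identity, together with orthogonality of the images and minimality, pins down $T_e(v)$ rather than merely constraining it. This is the step mimicking \cite[Proposition 3.1]{PeTan16}, and the rest of the Proposition is bookkeeping on top of it — induction, decomposition into minimal tripotents, and a limiting argument for the $c_0$-sums — none of which should present conceptual difficulty once $(a)$ is in hand.
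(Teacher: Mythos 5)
Your overall architecture is the paper's (the cascading order $(a)\Rightarrow(b)\Rightarrow(c)\Rightarrow(d)\Rightarrow(e)$, and comparing the two affine expressions for $f(e+v)$ is indeed the pivot of $(a)$), but the two points you yourself flag as ``to be made rigorous'' are precisely the substance of $(a)$, and you leave both unproved. First, the orthogonality $f(e)\perp f(v)$ is not a free fact: in the paper it is obtained by showing $f(-e)=-f(e)$ and $f(-v)=-f(v)$ (via Proposition \ref{p l 3.4 new}, since $f(\pm e)$ and $f(\pm v)$ are minimal tripotents with $\|f(e)-f(-e)\|=\|f(v)-f(-v)\|=2$), whence $\|f(e)\pm f(v)\|=\|e\pm v\|=1$, and then invoking the description of contractive perturbations of a minimal tripotent, $\{y\in B:\|y\pm f(e)\|\leq 1\}=\mathcal{B}_{B_0(f(e))}$, from \cite{FerMarPe2012}. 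Second, the identity $f(e)+T_e(v)=f(v)+T_v(e)$ alone does not pin down $T_e(v)$; the paper closes this by noting that $T_e$, being a surjective real linear isometry, preserves the symmetrized triple product \cite[Theorem 4.8]{IsKaRo95} and hence sends minimal tripotents to minimal tripotents, then applying $P_2(f(v))$ to both sides (using $T_v(e)\in B_0(f(v))$ and $f(e)\perp f(v)$) to get $P_2(f(v))(T_e(v))=f(v)$, and concluding $T_e(v)=f(v)$ from \cite[Lemma 1.6]{FriRu85} together with minimality of $T_e(v)$. Without these ingredients $(a)$ is not established, and everything downstream depends on it.

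There is a second genuine gap in your $(d)$ (and it resurfaces in $(e)$): a non-zero tripotent $u\in E_0(e)$ cannot in general be written as a norm-convergent finite-or-countable sum of mutually orthogonal minimal tripotents --- orthogonal tripotents all have norm one, so such a series never converges in norm when infinite --- and an infinite-rank tripotent does not belong to $K(E_0(e))$ (the identity of $B(H)$ is not compact), so your ``cleanest route'' through $(c)$ is unavailable exactly in the case that matters; a weak$^*$ decomposition instead would require weak$^*$-continuity of $T_e$, which is not yet available at this stage (the paper only invokes it, via \cite{MarPe}, in the proof of Theorem \ref{t Tingley on atomic JBW-triples}). The paper's proof of $(d)$ needs no decomposition of $u$: it picks a single minimal tripotent $v\leq u$ and computes $f(e)+T_e(u)=f(e+u)=f(v)+T_v(e)+T_v(u-v)=f(e)+f(u)$, using Theorem \ref{t surjective isometries between the spheres preserve minimal tripotents} applied to $v$ and part $(a)$. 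The same single-minimal-tripotent device ($e_k\leq v_k$ with $\lambda_k=1$) yields $(e)$, whereas your sketch either applies the Theorem to the non-minimal tripotent $v_1$ (for which no $T_{v_1}$ is provided, since $F_{v_1}$ is not a maximal face) or decomposes $v_1$ into minimal tripotents, hitting the same infinite-rank obstruction. Your parts $(b)$ and $(c)$ are essentially the paper's arguments and are fine.
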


\begin{proof}$(a)$ Let $v$ be a minimal tripotent in $E_0(e)$. By Theorem \ref{t surjective isometries between the spheres preserve minimal tripotents} $f(e)$, $f(-e)$, $f(v)$ and $f(-v)$ are minimal tripotents in $B$ with $\|f(e)-f(-e)\| = \| 2e \| = 2$. Lemma \ref{p l 3.4 new} implies that $f(-e) = - f(e)$. Similarly,  $f(-v) = - f(v)$. By hypothesis $$\|f(e) \pm f(v) \| = \| f(e) - f(\mp v)\| = \|e\pm v\| =1.$$ By \cite[$(6)$ in page 360]{FerMarPe2012} we have $$\hbox{cp} (\{f(e)\}) = \{ y\in B : \|y\pm f(e)\|\leq 1 \} =\mathcal{B}_{B_0(e)}.$$ In particular $f(v)\in \mathcal{B}_{B_0(e)}.$\smallskip

The mapping $T_e$ is a surjective real linear isometry between JBW$^*$-triples. It follows from \cite[Theorem 4.8]{IsKaRo95} that $T_e$ preserves the symmetrized triple product $\langle x,y,z \rangle := \frac13 (\{x,y,z\}+\{z,x,y\}+\{y,z,x\})$. In particular $T_e$ preserves cubes of the form $x^{[3]}= \{x,x,x\}$ and maps minimal tripotents to minimal tripotents. Therefore $T_e(v)$ is a minimal tripotent in $B_0(e)$.\smallskip

Again by Theorem \ref{t surjective isometries between the spheres preserve minimal tripotents} we have $$f(v) + T_v(e) =f(e+v) = f(e) + T_e (v).$$
Since $f(v)\perp f(e)$, we deduce that $$f(v) = P_2(f(v)) (f(v) + T_v(e)) = P_2(f(v)) (f(e) + T_e (v)) = P_2(f(v)) (T_e (v)).$$ Since $\|T_e (v)\|= \|v\|=1$, Lemma 1.6 in \cite{FriRu85} implies that $T_e(v) = f(v) + P_0(f(v)) T_e( v)$. Finally, since $T_e(v)$ is a minimal tripotent we get $P_0(f(v)) T_e( v)=0$, $T_e(v) = f(v)$.\smallskip

$(b)$ Let $e_1,\ldots,e_n$ be mutually orthogonal minimal tripotents in $E$, and let $\lambda_1,\ldots,\lambda_n$ be positive real numbers with $\max_j \lambda_j=1$. We may assume $\lambda_1=1$. Let $T_{e_1}$ be the surjective real linear isometry given by Theorem \ref{t surjective isometries between the spheres preserve minimal tripotents}. We deduce from the just quoted Theorem and the statement in $(a)$ that $$f\left(\sum_{j=1}^n \lambda_j e_j\right)= f(e_1) + T_{e_1} \left( \sum_{j=2}^n \lambda_j e_j \right)= f(e_1) + \sum_{j=2}^n \lambda_j  T_{e_1} \left(e_j \right) = \sum_{j=1}^n \lambda_j f\left(e_j\right).$$

$(c)$ We have already commented that every element in $K(E)$ can be approximated in norm by an element of the form $\sum_{j=1}^n \lambda_j e_j$, where $e_1,\ldots,e_n$ are mutually orthogonal minimal tripotents in $E$, and $\lambda_1,\ldots,\lambda_n$ are positive real numbers. Consequently, elements in $S(K(E))$ can be approximated in norm by finite sums of the form $\sum_{j=1}^n \lambda_j e_j$, with $e_1,\ldots,e_n$ and $\lambda_1,\ldots,\lambda_n$ as above and $\max_j \lambda_j=1$. If we observe that, by Theorem \ref{t surjective isometries between the spheres preserve minimal tripotents} and $(b)$, $\displaystyle f\left(\sum_{j=1}^n \lambda_j e_j\right)= \sum_{j=1}^n \lambda_j f\left(e_j\right)\in S(K(B))$, we conclude that $f(S(K(E)))\subseteq S(K(B))$. Applying the same argument to $f^{-1}$ we get $f(S(K(E)))= S(K(B))$.\smallskip

$(d)$ Let $u$ be a non-zero tripotent in $E_0(e)$, where $e$ is a minimal tripotent in $E$. We may assume that $u$ is not minimal, otherwise the desired statement follows from $(a)$. Thus, since $E$ is atomic, we can find a minimal tripotent $v$ in $E_0(e)$ with $u \geq v$. By Theorem \ref{t surjective isometries between the spheres preserve minimal tripotents} we have $$f(e) + T_e (u) = f(e +u ) = f(e + v+ (u-v)) = f(v) + T_v (e) + T_v (u-v)$$ $$ = \hbox{(by $(a)$)} = f(v) + f(e) + T_v(u-v) = f(e) + f(v + u-v) = f(e) + f(u),$$ which proves the desired statement.\smallskip

$(e)$ Under the assumptions there exists $k\in \{1,\ldots,n\}$ with $\lambda_k=1$. Since $E$ is atomic, we can find a minimal tripotent $e_k\leq v_k$. Let $T_{e_k}$ be the surjective real linear isometry given by Theorem \ref{t surjective isometries between the spheres preserve minimal tripotents}. We deduce from the just quoted Theorem and the statement in $(d)$ that $$f\left(\sum_{j=1}^n \lambda_j v_j\right)= f(e_k) + T_{e_k} \left( (v_k-e_k)+ \sum_{j\neq k} \lambda_j v_j \right)$$ $$= f(e_k) + T_{e_k} \left( (v_k-e_k) \right) + \sum_{j\neq k} \lambda_j T_{e_k} \left(v_j \right)$$ $$= f(e_k + v_k -e_k) + \sum_{j\neq k} \lambda_j  f \left(v_j \right)= \sum_{j=1}^n \lambda_j f\left(v_j\right).$$
\end{proof}

We can establish now our main result.

\begin{theorem}\label{t Tingley on atomic JBW-triples} Let $f: S(E) \to S(B)$ be a surjective isometry, where $E$ and $B$ are atomic JBW$^*$-triples. Then there exists a {\rm(}unique{\rm)} real linear isometry $T:E\to B$ such that $f=T_{|S(E)}$.
\end{theorem}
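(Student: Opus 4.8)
The plan is to bootstrap from the detailed description of $f$ on finite linear combinations of orthogonal tripotents (Proposition \ref{p properties for finite linear combinations of minimal tripotents}) to a global real-linear isometry, first on the weakly compact ideal $K(E)$, then on all of $E$ by weak$^*$-density. First I would exploit part $(c)$ of Proposition \ref{p properties for finite linear combinations of minimal tripotents}: $f$ restricts to a surjective isometry $S(K(E)) \to S(K(B))$ between the unit spheres of the weakly compact JB$^*$-triples. Since every element of $S(K(E))$ is a norm-limit of finite sums $\sum_{j=1}^n \lambda_j e_j$ of orthogonal minimal tripotents with $\max_j\lambda_j = 1$, and $f$ acts on such sums by $f(\sum \lambda_j e_j) = \sum \lambda_j f(e_j)$, I want to conclude that $f|_{S(K(E))}$ extends to a real-linear isometry $T_0 : K(E) \to K(B)$. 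This is exactly the situation of Tingley's problem for weakly compact JB$^*$-triples, already solved in \cite{FerPe17}; alternatively one can argue directly that the formula $T_0(\sum \lambda_j e_j) := \sum \lambda_j f(e_j)$ (for general real $\lambda_j$, using $f(-e) = -f(e)$ from the proof of part $(a)$) is well defined, isometric on a dense set, and hence extends. The key homogeneity input is that for a minimal tripotent $e$ and $x \in \mathcal{B}_{E_0(e)}$ one has $f(e+x) = f(e) + T_e(x)$ with $T_e$ linear, so scaling inside a fixed face is controlled.

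Next I would upgrade $T_0$ to a real-linear isometry $T : E \to B$. Since $E = K(E)^{**}$ and $B = K(B)^{**}$, the bitranspose $T := (T_0)^{**}$ is a real-linear isometry of $E$ into $B$ (weak$^*$-continuous, still isometric, still surjective because $T_0$ is surjective and taking biduals preserves surjectivity of isometries). It remains to check $T|_{S(E)} = f$. For this I would use that every norm-one element $x \in E$ lies in the face $F_{u(x)}$ determined by its support tripotent, but more directly: every $x \in S(E)$ can be weak$^*$-approximated by elements of $S(K(E))$ or, better, one reduces to showing $f(x) = T(x)$ for $x$ a finite linear combination of orthogonal minimal tripotents of $E$ with largest coefficient $1$ — which is precisely Proposition \ref{p properties for finite linear combinations of minimal tripotents}$(b)$ — and then argues that such elements are weak$^*$-dense in $S(E)$, while both $f$ (on the relevant faces, via the affine description in Theorem \ref{t surjective isometries between the spheres preserve minimal tripotents}) and $T$ are weak$^*$-continuous enough on bounded sets to pass to the limit. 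A cleaner route: fix $x\in S(E)$; then $x \in F_e$ for some minimal tripotent $e \in E$ with $e \le u(x)$ (using atomicity and that $u(x)$ dominates a minimal tripotent), write $x = e + P_0(e)(x)$, and apply the real-linear isometry $T_e$ of Theorem \ref{t surjective isometries between the spheres preserve minimal tripotents} together with the identification $f(e+P_0(e)(x)) = f(e) + T_e(P_0(e)(x)) = T(e) + T(P_0(e)(x)) = T(x)$, where the middle equality requires $T_e = T|_{E_0(e)}$, i.e.\ the local Mankiewicz isometries glue to the global one.

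Therefore the technical heart is the \textbf{gluing / consistency step}: showing that the family $\{T_e\}$ of local real-linear isometries (one per minimal tripotent $e$, defined on $E_0(e)$) is compatible, so that $T_0$ on $K(E)$ is unambiguously defined and its bitranspose restricts to $f$ on all of $S(E)$. Parts $(a)$, $(d)$ and $(e)$ of Proposition \ref{p properties for finite linear combinations of minimal tripotents} do most of this work: $(a)$ says $T_e$ agrees with $f$ on minimal tripotents of $E_0(e)$, $(d)$ extends this to all tripotents, and $(e)$ gives additivity of $f$ on orthogonal sums, which forces $T_{e}$ and $T_{e'}$ to agree on $E_0(e) \cap E_0(e')$ whenever $e \perp e'$. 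The remaining delicate point — for which I would lean on \cite{FriRu85}, \cite{FriRu86} and the atomic structure — is that $E$ has \emph{enough} minimal tripotents: any norm-one $x$ is dominated, via its support tripotent in $E^{**}=E$, by some minimal tripotent of $E$, so the local description always applies; and the weak$^*$-density of the span of minimal tripotents (recalled in the introduction) guarantees the extension $T$ is determined. Uniqueness of $T$ is immediate once existence is known, since a real-linear map agreeing with a prescribed function on $S(E)$ is unique.
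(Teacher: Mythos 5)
Your first half coincides with the paper's proof: use Proposition \ref{p properties for finite linear combinations of minimal tripotents}$(c)$ to see that $f$ restricts to a surjective isometry $S(K(E))\to S(K(B))$, invoke the solution of Tingley's problem for weakly compact JB$^*$-triples in \cite{FerPe17} to get a real linear $S:K(E)\to K(B)$ with $S=f$ on $S(K(E))$, and put $T=S^{**}:E=K(E)^{**}\to B=K(B)^{**}$. The problem lies in your final step, and both routes you offer for it have a genuine gap. Route one (``finite combinations of orthogonal minimal tripotents are weak$^*$-dense in $S(E)$ and one passes to the limit'') cannot work as stated: $f$ is only a surjective isometry between unit spheres and has no weak$^*$-continuity whatsoever, so weak$^*$ limits may not be pushed through $f$; the paper is careful to take weak$^*$ limits only through the \emph{linear} maps $T$ and $T_e$ (the latter being weak$^*$-continuous by \cite[Proposition 2.3(1)]{MarPe}), never through $f$. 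Route two, your ``cleaner route,'' rests on the claim that every $x\in S(E)$ lies in a face $F_e=e+\mathcal{B}_{E_0(e)}$ for some \emph{minimal tripotent $e$ of $E$} below the support tripotent of $x$. This is false: take $E=B(\ell_2)$ (a type 1 Cartan factor) and $x=\sum_n \frac{n}{n+1}\, e_{nn}$. Then $\|x\|=1$ but $\|P_2(e)(x)\|<1$ for every minimal tripotent $e\in E$, so $x\notin F_e$ for any such $e$; equivalently, the odd powers $x^{[2n-1]}$ tend to $0$ in the weak$^*$ topology of $E$, and the support tripotent of $x$ in $E^{**}$ is orthogonal to all minimal tripotents of $E$. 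This is exactly the subtlety the paper emphasizes (minimal/compact tripotents governing faces need not live in the triple itself), so the faces $F_e$ with $e$ minimal in $E$ do \emph{not} cover $S(E)$ and the local description does not ``always apply.''

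The missing ingredient is the paper's actual endgame: first prove $f(w)=T(w)$ for \emph{every} non-zero tripotent $w\in E$ (choose a minimal $e\le w$, use that $T_e$ and $T$ are weak$^*$-continuous and agree on finite-rank tripotents of $E_0(e)$, which are weak$^*$-dense among tripotents there, and then $f(w)=f(e)+T_e(w-e)=T(w)$ via Theorem \ref{t surjective isometries between the spheres preserve minimal tripotents} and Proposition \ref{p properties for finite linear combinations of minimal tripotents}$(d)$); then use that tripotents are \emph{norm}-total in a JBW$^*$-triple \cite[Lemma 3.11]{Horn87}, so every $x\in S(E)$ is a norm-limit of normalized sums $\sum_j\lambda_j v_j$ of mutually orthogonal non-zero (possibly infinite-rank) tripotents, and conclude by Proposition \ref{p properties for finite linear combinations of minimal tripotents}$(e)$ and norm-continuity of $f$ and $T$. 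Your identification of the gluing $T_e=T|_{E_0(e)}$ as the technical heart is on target (and it is provable along the lines above), but without Horn's norm-totality, or some substitute handling elements like the example above, your argument does not reach all of $S(E)$.
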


\begin{proof} Let $K(E)$ and $K(B)$ denote the ideals of $E$ and $B$ generated by the minimal tripotents in $E$ and $B$, respectively. We deduce from Proposition \ref{p properties for finite linear combinations of minimal tripotents}$(c)$ that $f (S(K(E))) = S(K(B))$ and $f|_{S(K(E))} : S(K(E))\to S(K(B))$ is a surjective isometry. We observe that $K(E)$ and $K(B)$ are weakly compact JB$^*$-triples in the sense of \cite{BuChu,FerPe17}, so by \cite[Theorem 2.5]{FerPe17} there exists a surjective real linear isometry $S : S(K(E))\to S(K(B))$ satisfying $f(x) = S(x)$, for every $x\in S(K(E))$.\smallskip

The mapping $T= S^{**} : K(E)^{**} =E\to K(B)^{**}=B$ is a surjective real linear isometry and a weak$^*$ continuous mapping. By construction $T(x) = S(x) = f(x)$, for every $x\in S(K(E))$.\smallskip

We claim that \begin{equation}\label{eq coincidence on tripotents} T(w) = f(w), \hbox{ for every non-zero tripotent } w\in E.
\end{equation}  Since $E$ is atomic, we can find a minimal tripotent $e\leq w$. The mapping $T_e : E_0(e) \to B_0 (f(e))$ given by Theorem \ref{t surjective isometries between the spheres preserve minimal tripotents} is a surjective real linear isometry between (atomic) JBW$^*$-triples. By \cite[Proposition 2.3$(1.)$]{MarPe} $T_e$ also is weak$^*$-continuous. By construction $T_e (v) = f(v) = S(v) = T(v)$ for every finite-rank tripotent $v\in E_0(e)$. Since every tripotent in an atomic JBW$^*$-triple can be approximated in the weak$^*$-topology by a net of finite-rank tripotents, we deduce from the above that $T(w) = T_e (w)$ for every tripotent in $E_0(e)$. Now, by Theorem \ref{t surjective isometries between the spheres preserve minimal tripotents} and Proposition \ref{p properties for finite linear combinations of minimal tripotents}$(d)$ or $(e)$, we have $$f(w) = f(e) + T_e (w-e) = f(e) + T(w-e) = T(e) + T(w-e) = T(w),$$ which proves the claim.\smallskip

It is known that in a JBW$^*$-triple $M$ the set of tripotents in $M$ is norm-total, that is, every element in $M$ can be approximated in norm by finite real linear combinations of mutually orthogonal non-zero tripotents in $M$ (see \cite[Lemma 3.11]{Horn87}. However, a general JBW$^*$-triple need not contain a single minimal tripotent). Combining this fact with \eqref{eq coincidence on tripotents} and Proposition \ref{p properties for finite linear combinations of minimal tripotents}$(e)$, we can easily conclude that $T(x) = f(x),$ for every $x\in S(E)$.
\end{proof}

\textbf{Acknowledgements} Authors partially supported by the Spanish Ministry of Economy and Competitiveness and European Regional Development Fund project no. MTM2014-58984-P and Junta de Andaluc\'{\i}a grant FQM375.

\end{document}